\pgfplotsset{compat=1.17}   
\newcommand{\hk}{\hat{k}}         
\newcommand{\inner}[2]{\ensuremath{\langle #1,#2 \rangle}}
\newcommand{\R}{\mathbb{R}}
\newcommand{\C}{\mathbb{C}}
\newcommand{\N}{\mathbb{N}}
\newcommand{\Z}{\mathbb{Z}}
\newcommand{\tran}{^{\mathsf{T}}}
\newcommand{\bi}{\begin{itemize}}
\newcommand{\ei}{\end{itemize}}
\newcommand{\ben}{\begin{enumerate}}
\newcommand{\een}{\end{enumerate}}
\newcommand{\be}{\begin{equation}}
\newcommand{\ee}{\end{equation}}
\newcommand{\bea}{\begin{eqnarray}} 
\newcommand{\eea}{\end{eqnarray}}
\newcommand{\ba}{\begin{align}} 
\newcommand{\ea}{\end{align}}
\newcommand{\bse}{\begin{subequations}} 
\newcommand{\ese}{\end{subequations}}
\newcommand{\bc}{\begin{center}}
\newcommand{\ec}{\end{center}}
\newcommand{\bfi}{\begin{figure}}
\newcommand{\efi}{\end{figure}}
\newcommand{\ca}[2]{\caption{#1 \label{#2}}}
\newcommand{\ig}[2]{\includegraphics[#1]{#2}}
\newcommand{\bmp}[1]{\begin{minipage}{#1}}
\newcommand{\emp}{\end{minipage}}
\newcommand{\tbox}[1]{{\mbox{\tiny \rm #1}}}
\newcommand{\mbf}[1]{{\mathbf #1}}
\newcommand{\half}{\mbox{\small $\frac{1}{2}$}}
\newcommand{\bigO}{{\mathcal O}}
\newcommand{\eps}{\varepsilon}     
\newcommand{\bj}{j}    
\newcommand{\xz}{x}                 
\newcommand{\kz}{{\mbf{k}_\xz}}       
\newcommand{\tkz}{{\tilde{\mbf{k}}_\xz}}       
\newcommand{\bal}{\bm{\alpha}}       
\newcommand{\tbal}{\tilde{\bm{\alpha}}}       
\newcommand{\bbe}{\bm{\beta}}           
\newcommand{\bgz}{\bm{\gamma}_\xz}     
\newcommand{\bmu}{\bm{\mu}}          
\newcommand{\tbmu}{\tilde{\bm{\mu}}}          
\newcommand{\X}{\Phi}                
\newcommand{\Xt}{\Phi^*}            
\newtheorem{theorem}{Theorem}
\newtheorem{lem}[theorem]{Lemma}
\newtheorem{pro}[theorem]{Proposition}
\newtheorem{cor}[theorem]{Corollary}
\newtheorem{rmk}[theorem]{Remark}
\newtheorem{conj}[theorem]{Conjecture}
\newtheorem*{justification}{Justification of Conjecture \ref{c:Kheur}}
\title{Uniform approximation of common Gaussian process kernels
  using equispaced Fourier grids}
\author[1]{Alex Barnett\thanks{abarnett@flatironinstitute.org}}
\author[2]{Philip Greengard}
\author[1]{Manas Rachh}
\affil[1]{Center for Computational Mathematics, Flatiron Institute, New York, NY, 10010}
\affil[2]{Department of Statistics, Columbia University, New York, NY, 10027}
\date{\today}
\begin{document}
\maketitle


\begin{abstract}
The high efficiency of a recently proposed method for computing with Gaussian  processes relies on expanding a (translationally invariant) covariance kernel into complex exponentials, with frequencies lying on a Cartesian equispaced grid.  Here we provide rigorous error bounds for this approximation for two popular kernels---Mat\'ern and squared exponential---in terms of the grid spacing and size. The kernel error bounds are uniform over a hypercube centered at the origin. Our tools include a split into aliasing and truncation errors, and bounds on sums of Gaussians or modified Bessel functions over various lattices. For the Mat\'ern case, motivated by numerical study, we conjecture a stronger Frobenius-norm bound on the covariance matrix error for randomly-distributed data points. Lastly, we prove bounds on, and study numerically, the ill-conditioning of the linear systems arising in such regression problems.
\end{abstract}

\section{Introduction}
\label{s:intro}

Over the last couple of decades, Gaussian processes (GPs) have seen widespread use in statistics and data  science across a range of natural 
and social sciences \cite{rasmus1, bartok1, dfm1, cressie2018, bda, heaton2019}. 
In the canonical Gaussian process regression task, the goal is to recover 
an unknown real-valued function $f: D \subseteq \R^d \to \R$ using noisy observations of that 
function. Specifically, given data locations $x_1,\dots,x_N \in \R^d$, and 
corresponding observations $y_1,\dots, y_N \in \R$, the usual Gaussian process
regression model is 
\begin{align}
y_n & \;\sim\; f(x_n) + \epsilon_n, \qquad n=1,\ldots,N, \\
f(x) & \;\sim\; \mathcal{GP}(m(x), k(x, x')),
\end{align}
where $\mathcal{GP}$ denotes a Gaussian process distribution, 
$\epsilon_n \sim \mathcal{N}(0, \sigma^2)$ is independent and identically
distributed (iid) noise of known variance $\sigma^2>0$,
$m: \R^d \rightarrow \R$ is a given
prior mean function, and $k: \R^d \times \R^d \to \R$ is a given
positive definite covariance kernel \cite{rasmus1}.
In practice, $k$ is often also translation-invariant, that is,
$k(x, x') = k(x - x')$.

In general, the mean function $m$ can be set to zero by subtraction, which 
from now we will assume has been done.
Then, the marginal posterior of $f$ 
at any point $x \in D$ is Gaussian with mean $\mu(x)$ and variance $s(x)$ given by,
\bea
\mu(\xz) &=& \sum_{n=1}^{N} \alpha_n k(\xz,x_n)
,
\label{mu}                 
\\
s(\xz) &=&
k(\xz,\xz) - \sum_{n=1}^N  \gamma_{\xz,n} k(\xz, x_n)
,
\label{s}
\eea
where $\bal := \{\alpha_n\}_{n=1}^N$
and $\bgz := \{\gamma_{\xz,n}\}_{n=1}^N$ are the vectors in $\R^N$
that uniquely solve the $N\times N$ symmetric linear systems
\bea
(K + \sigma^2 I) \bal &=& \mbf{y}
,
\label{fs_sys} \\
(K + \sigma^2I) \bgz &=& \kz
,
\label{bgz}
\eea
respectively,
where $K$ denotes the $N \times N$ positive semidefinite matrix with 
$K_{i, j} = k(x_i, x_j)$, and $\kz := \{ k(x,x_{n}) \}_{n=1}^{N}$.
These are known as ``function space'' linear systems
\cite{rasmus1}.

While GP regression has achieved widespread popularity, an inherent practical
limitation of the procedure is its computational cost.
A dense direct solution of the above linear systems requires
$\bigO(N^3)$ operations, and in the case of variance $s(x)$
a new right-hand side and solve is needed for each $x$.
Since in many modern data sets $N$ can be in the millions or more, a large 
literature has emerged on faster approximate methods for solving these
linear systems, and related tasks such as computing the determinant~\cite{RRGP05, nystrom, rasmus1, gardner1, wilson2, oneil1,minden1,stein1,chen1}.
An in-depth review of the computational environment for GP regression 
is outside the scope of this paper, though a summary can be found in, for example, \cite{heaton2019,liu20bigreview,efgp_comput}.

In this work, we analyze the equispaced Fourier Gaussian process (EFGP) regression approach recently proposed by the authors~\cite{efgp_comput}. Briefly, in EFGP, the covariance kernel is factorized as $k(x-x') \approx \tilde k(x-x') := 
\sum_{j=1}^M \phi_j (x)\phi_j(x')$ where the plane wave bases $\{\phi_{j}\}$ arise from an equispaced quadrature discretization of the
inverse Fourier transform of the covariance kernel, using
$M = \bigO(m^d)$ nodes, where $m$ sets the grid size in each dimension.
This leads to a rank-$M$ approximation of the covariance matrix $K \approx \tilde{K} = \Phi \Phi^{*}$.
The method then proceeds to solve the equivalent ``weight space'' dual
system, with $M \times M$ system matrix $\Phi^{*} \Phi + \sigma^2I$,
using conjugate gradients (CG)
\cite{dahlquist1}. The method derives its computational efficiency from the ability to rapidly apply the Toeplitz matrix $\Phi^{*} \Phi$ using padded $d$-dimensional fast Fourier transforms (FFTs) with cost $O(M \log{M})$.
A precomputation which exploits nonuniform FFTs of cost $O(N + M\log{M})$ is needed;
however, the
cost per iteration is {\em independent} of the number of data points $N$.
The result is that for low-dimensional problems (say, $d\le 3$),
$N$ as high as $10^9$ can be regressed in minutes on a desktop; this is much
faster that competing methods in many settings \cite{efgp_comput}.

Error bounds for such a Fourier kernel approximation are
crucial in practice
in order to choose the numerical grid spacing $h$ and grid size $m$.
Then the error in the computed posterior mean when using $\tilde{k}$ as the covariance kernel can be bounded in terms of the Frobenius norm of $\tilde{K}-K$, which in turn can be bounded by the uniform kernel approximation error \cite[Thm.~4.4]{efgp_comput}.
This is a deterministic analysis of what is sometimes termed
``computational uncertainty'' \cite{wenger2022}.
This motivates us to derive error estimates for $\tilde k - k$, for the commonly-used
squared exponential (SE) and Mat\'ern kernels, with explicit dimension- and kernel-dependent constants.
Our bounds are uniform over a kernel argument lying in $[-1,1]^d$, as appropriate for
evaluating the kernel $\tilde k(x-x')$ for all $x,x'$ in the hypercube $D = [0,1]^d$.
We provide convenient explicit bounds on $h$ and $m$ that guarantee a
user-specified uniform error $\eps$ (Corollaries~\ref{c:SEparams} and \ref{c:matpars}).
Our Mat\'ern bounds are reminiscent of an analysis of 
Gaussian random field sampling by Bachmayr et al.~\cite{bachmayr20},
but we include kernel approximation error and our constants are explicit. 
Since an equispaced tensor-product grid is perhaps the {\em simplest (deterministic) way to discretize a kernel in Fourier space}, we expect the bounds to
have wider applications to kernel approximations and Gaussian random fields.

Yet, we find that such bounds are in practice pessimistic for Mat\'ern kernels of low smoothness $\nu$,
due to the slow algebraic Fourier decay of the kernel.
To address this discrepancy we conjecture
a stronger bound on $\| \tilde K - K \|_{F}$ for
data points drawn iid randomly from some absolutely continuous measure.
We support this with a brief derivation and a numerical
study. This provides a heuristic for choosing more efficient EFGP numerical parameters.

Finally, motivated by experiments \cite[Sec.~5]{efgp_comput} exhibiting
very large CG iteration counts, we include a preliminary
analysis and study of the condition numbers of the ``exact'' (true kernel $k$)
linear system, and the
approximate (kernel $\tilde k$) function and weight space systems.
While the issue of ill-conditioning of the function space system
is well known \cite{steinprecond,rudi17}
(and studied in the operator case \cite{wathenzhu15}
as well as in the $\sigma=0$ setting
of radial basis approximation \cite[Ch.~12]{wendland05book}),
the weight space system condition number is less well studied.
It turns out that both function- and weight-space linear systems
are nearly as ill-conditioned as their upper bounds allow (about $N/\sigma^2$),
even though the
the GP regression {\em problem}
itself is very well-conditioned (Proposition~\ref{p:mucond}).
It is thus a curious situation from the perspective of numerical analysis
to have a well-conditioned problem require an ill-conditioned algorithm for its
solution
(compare, e.g., the unstable algorithm discussed in \cite[Ch.~15]{trefethen2}).
This twist 
complements the main kernel error bounds of the paper.

The remainder of this paper is structured as follows.
In~\Cref{s:prelim}, we review bounds on errors in posterior means using approximate GP regression, and provide a summary of the EFGP algorithm introduced
in \cite{efgp_comput}.
The main results are the approximation errors for the
SE and Mat\'ern kernels derived in~\Cref{s:discr}. In~\Cref{s:frob}, we 
conjecture a bound for the norm of $\tilde K-K$ in terms of a weighted $L^2$ approximation
of the covariance kernel, and give a heuristic derivation
along with numerical evidence.
We discuss bounds on various condition numbers, as well as a numerical study,
in~\Cref{s:cond}. We summarize and list some open questions in~\Cref{s:conc}.

\section{Preliminaries \label{s:prelim}}
In this section,
we motivate the study of the kernel approximation error
by reviewing how it controls the error in the posterior mean
(relative to exact GP regression with the true kernel).
We also summarize the EFGP numerical method.
Both are presented in more depth in \cite{efgp_comput}.

\subsection{Error estimates for the posterior mean}
Suppose that $\tilde{k}$ is an approximation to $k$ with a uniform error $\varepsilon$, i.e.
\begin{equation*}
\sup_{x,x' \in D} |k(x,x') - \tilde{k}(x,x')| \le \varepsilon,
\end{equation*} 
then, since all data points lie in $D$,
the error in the corresponding covariance matrix is easily bounded by
\begin{equation}
\| K- \tilde{K} \| \leq  \| K -\tilde{K} \|_{F} \leq N \varepsilon,
\label{Kerr}
\end{equation}
where $\| \cdot \|$ denotes the spectral norm of the matrix, and $\| \cdot \|_{F}$ denotes the Frobenius norm.

Furthermore, let $\bal$, and $\tbal$ be the solutions to 
\begin{equation}
(K + \sigma^2 I) \bal = \mbf{y}, \qquad (\tilde{K} + \sigma^2 I ) \tbal = \mbf{y} ,
\end{equation}
and let $\bmu = K \bal$, and $\tbmu = \tilde{K} \tbal$ be the corresponding posterior mean vectors at the observation points.
Then the error in this posterior mean vector satisfies
\begin{equation}
  \frac{\| \bmu - \tbmu \|}{\|\mbf{y} \|} \leq \frac{\| K -\tilde{K} \|}{\sigma^2} \leq \frac{N \varepsilon}{\sigma^2}.
  \label{muerr}
\end{equation}

Finally, let $\mu(\xz) = \mbf{k}_\xz\tran \bal$,
where $\kz := [k(\xz, x_1), \dots, k(\xz, x_N)]\tran$,
be the true posterior mean at a new test target $\xz\in\R^d$,
and let $\tilde\mu(\xz) = \tkz\tran \tbal$ be its approximation.
Then its error (scaled by the root mean square data magnitude $\|\mbf{y}\|/\sqrt{N}$) obeys
\bea\label{muerrnew}
  \frac{|\tilde\mu(\xz)-\mu(\xz)|}{\|\mbf{y}\|/\sqrt{N}}
&\le&
\biggl(\frac{N^2}{\sigma^4} + \frac{N}{\sigma^2} \biggr) \eps .
\eea
These results
(simplifications of \cite[Thm.~4.4]{efgp_comput})
show that it suffices to bound $\eps$, the uniform approximation error of the covariance kernel, in order to bound the error in computed posterior means.

\subsection{Summary of the EFGP numerical scheme for GP regression}
\label{s:efgp}

Suppose that $k: \mathbb{R}^{d} \to \mathbb{R}$ describes a
translation-invariant and integrable covariance kernel $k(x-x')$.
In EFGP, this kernel is approximated by discretizing the Fourier transform of the
covariance kernel using an equispaced quadrature rule. 
Specifically, using the Fourier transform convention of \cite{rasmus1},
we have
\begin{align}
  & \hat{k}(\xi) = \int_{\R^{d}} k(x) e^{-2\pi i \inner{\xi}{x}}\,  dx,
\qquad   \xi \in \R^{d},
  \\
  & k(x) = \int_{\R^{d}} \hat{k}(\xi) e^{2\pi i \inner{\xi}{x}} \, d\xi,
\qquad x \in \R^{d}.
  \label{inv_ft}
\end{align}
Discretizing \eqref{inv_ft} with an equispaced trapezoid tensor-product
quadrature rule we obtain
\begin{align}\label{ker_eff}
  k(x-x') \;\approx\;
  \tilde{k}(x - x') = \sum_{\bj \in J_{m}} h^d \hk(h\bj) e^{2\pi i h \inner{\bj}{x - x'}} ,
\end{align}
where the multiindex $\bj:=(j^{(1)},j^{(2)},\dots,j^{(d)})$
has elements $j^{(l)} \in \{-m,-m+1,\dots,m\}$ and thus
ranges over the tensor product set
$$
J_{m} := \{-m,-m+1,\dots,m\}^d
$$
containing $M=(2m+1)^d$ elements.
Splitting the exponential in \eqref{ker_eff}
we get the rank-$M$ symmetric factorization for the approximate kernel
\begin{equation}
\tilde{k}(x,x') = \sum_{j \in J_{m}} \phi_j(x)\overline{\phi_j(x')} \, ,
\end{equation}
with basis functions
$\phi_{j}(x) := \sqrt{h^d \hat{k}(hj)}e^{2\pi i h \inner{\bj}{x}}$.
Inserting the data points $\{x_n\}_{n=1}^N$ shows that $\tilde K = \X \Xt$,
where the design matrix $\X$ has elements $\X_{nj} = \phi_j(x_n)$.
Then in EFGP one solves the $M$-by-$M$ weight-space system
\be
(\Xt\X + \sigma^2I)\bbe = \Xt\mbf{y}
\label{WS}
\ee
iteratively using CG.
Its right-hand side vector can be filled by observing that
$\sum_{n=1}^N e^{2\pi i h\inner{\bj}{x_n}} y_n$ takes the form of a
type 1 $d$-dimensional nonuniform discrete Fourier transform, which may
be approximated in $\bigO(N + M\log M)$ effort via standard nonuniform FFT
(NUFFT) algorithms \cite{dutt1}.  
Since $(\Xt\X)_{j,j'}$ depends only on $j-j'$,
then $\Xt\X$ is a Toeplitz matrix, and
its Toeplitz vector can be computed by another NUFFT.
With these two $N$-dependent precomputations done,
the application of $\Xt\X$ in each CG iteration is a discrete nonperiodic
convolution, so may be performed by a standard padded $d$-dimensional FFT.
Finally, once an approximate solution vector $\bbe := \{\beta_j\}_{j\in J_m}$ is found,
the posterior mean $\mu(x) = \sum_{j\in J_m} \beta_j \phi_j(x)$ may be rapidly
evaluated at a large number of targets $x$, now via a type 2 NUFFT.
This weight-space formula for $\mu$ is equivalent to a function-space
solution of \eqref{fs_sys} with $K$ replaced by its approximation $\tilde K$ (see, e.g., \cite[Lem.~2.1]{efgp_comput}).
The posterior variance $s(x)$ may be found similarly by iterative solution of \eqref{bgz}, then evaluating \eqref{s}.

Note that the equispaced Fourier grid---being the root cause of the Toeplitz structure---is
crucial for the efficiency of EFGP.
This motivates the study of the kernel approximation properties of such a
Fourier grid, the subject of the next section.

\section{Uniform bounds on the kernel discretization error}
\label{s:discr}
We now turn to the main results:
we derive explicit error estimates for the equispaced Fourier kernel approximation
in \eqref{ker_eff} in all dimensions $d$
for two families of commonly-used kernels: Mat\'ern and 
squared-exponential.
We assume that the source $x$ and target $x'$ are contained in the set $D=[0,1]^{d}$,
as appropriate when all data and evaluation points lie in this set.
Note that the coordinates may always be shifted and scaled to make this so.

We start by restating an exact formula for the error,
by exploiting the equispaced nature of the Fourier grid
(see \cite{efgp_comput}; for convenience we include the simple proof.)
\begin{pro}[Pointwise kernel approximation]  
  \label{p:alias_trunc}
 Suppose that the translationally invariant covariance kernel $k: \R^d \to \R$
  and its Fourier transform $\hat{k}$
  decay uniformly as
  $|k(x)|\le C(1+\|x\|)^{-d-\delta}$ and
  $|\hat{k}(\xi)|\le C(1+\|\xi\|)^{-d-\delta}$ for some $C$, $\delta>0$.
  Let $h>0$, $m\in\N$, then define $\tilde k$ by \eqref{ker_eff}.
  Then for any $x \in \R^d$ we have
  \begin{align}
    \label{alias_trunc}
  \tilde{k}(x) - k(x)
 \;\;=\;\;
  -
  \underbrace{\sum_{n \in \Z^d, \, n \neq \mbf{0}} k\left(x + \frac{n}{h} \right)}
  _\text{aliasing error}
  \;\;+\;\;
  \underbrace{h^{d} \sum_{j \in \mathbb{Z}^{d}, \, j \not \in J_{m}} \hat{k}(j h)e^{2\pi i h \inner{j}{x}}}_\text{truncation error}
  .
\end{align}
\end{pro}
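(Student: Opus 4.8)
The plan is to derive \eqref{alias_trunc} from the Poisson summation formula, viewing the finite sum in \eqref{ker_eff} as a truncation of an infinite equispaced (trapezoid) quadrature over the full frequency lattice $h\Z^d$. Concretely, I would introduce the spatial periodization $K_{\mathrm{per}}(x) := \sum_{n\in\Z^d} k(x+n/h)$ alongside the full trigonometric sum $\sum_{j\in\Z^d} h^d\hat k(hj)\,e^{2\pi i h\inner{j}{x}}$, prove these coincide, and then read off the claimed decomposition by elementary bookkeeping.

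First I would check that both series are well behaved. The hypothesis $|k(x)|\le C(1+\|x\|)^{-d-\delta}$ with $\delta>0$ makes $\sum_n |k(x+n/h)|$ converge absolutely and uniformly on compacta, so $K_{\mathrm{per}}$ is a continuous function (note $k$ is itself continuous, being the inverse transform of the integrable $\hat k$) that is periodic with respect to the lattice $\tfrac1h\Z^d$. Symmetrically, $|\hat k(\xi)|\le C(1+\|\xi\|)^{-d-\delta}$ gives $\sum_j |\hat k(hj)|<\infty$, so the trigonometric series has absolutely summable coefficients.

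The core step is to identify the Fourier coefficients of $K_{\mathrm{per}}$ on its fundamental cell (of volume $h^{-d}$). Interchanging the absolutely convergent sum over $n$ with the integral over the cell unfolds the periodization into a single integral over all of $\R^d$, which by the stated Fourier convention yields the coefficient $h^d\hat k(hj)$ for the mode $e^{2\pi i h\inner{j}{x}}$. Because these coefficients are absolutely summable, the associated series converges uniformly and its sum---having the same Fourier coefficients as the continuous function $K_{\mathrm{per}}$---must equal $K_{\mathrm{per}}$ pointwise. This is exactly the Poisson identity $\sum_{n\in\Z^d} k(x+n/h) = \sum_{j\in\Z^d} h^d\hat k(hj)\,e^{2\pi i h\inner{j}{x}}$.

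It then remains to split each side. On the frequency side, separating the indices $j\in J_m$ (which reassemble $\tilde k(x)$ via \eqref{ker_eff}) from their complement isolates the truncation contribution; on the spatial side, separating the $n=\mbf{0}$ term (equal to $k(x)$) from the terms $n\neq\mbf{0}$ isolates the aliasing contribution. Rearranging to solve for $\tilde k(x)-k(x)$ then gives \eqref{alias_trunc}. I expect the only genuinely delicate point to be the rigorous pointwise Poisson summation---guaranteeing convergence of the Fourier series of $K_{\mathrm{per}}$ to its value at every $x$, rather than merely in $L^2$. Here the two-sided decay assumption does precisely the work required: absolute summability of the coefficients forces uniform convergence, so the argument reduces to a careful but standard application, demanding no smoothness of $k$ beyond the continuity already supplied.
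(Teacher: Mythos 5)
Your argument is correct and is essentially the paper's own proof: the paper likewise rescales the Poisson summation formula (citing Stein--Weiss rather than re-deriving it via the Fourier coefficients of the periodization $K_{\mathrm{per}}$, as you do) and then splits off the $n=\mbf{0}$ term on one side and the $j\in J_m$ terms on the other. (Note that both routes actually yield $\tilde k(x)-k(x)=\sum_{n\neq\mbf{0}}k(x+n/h)-h^d\sum_{j\notin J_m}\hat k(jh)e^{2\pi i h\inner{j}{x}}$, i.e.\ \eqref{alias_trunc} with the overall sign flipped---a typo in the displayed statement that is immaterial to the magnitude bounds used later.)
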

\begin{proof}
  Writing $x$ in place of $x-x'$ in \eqref{ker_eff} gives
  $\tilde{k}(x) = h^{d} \sum_{j \in J_{m}} \hat{k}(j h)e^{2\pi i h  \inner{j}{x} }$.
  Shifting and scaling the Poisson summation formula
  \cite[Ch.~VII, Cor.~2.6]{steinweissbook} to give the form
\begin{align}\label{psf}
h^{d} \sum_{j \in \mathbb{Z}^{d}} \hat{k}(jh)e^{2\pi h i \inner{j}{x}} = 
\sum_{n \in \Z^{d}} k\biggl(x + \frac{n}{h} \biggr),
\end{align}
then splitting off the $n=\mbf{0}$ term on the right, and $J_m$ terms on the left,
completes the proof.
\end{proof}
Thus the error has two contributions, as illustrated in Fig.~\ref{f:discr}.
The {\it aliasing error} takes the form of a lattice sum of periodic images
(translates) of the kernel $k$, excluding the central element; see
Fig.~\ref{f:discr}(a).
Their separation is $h^{-1}$, and once this is a few times $\ell$ larger than
$1$, the exponential decay of the kernel ensures that this
term is uniformly small over $x\in[-1,1]^d$, the set $D-D$ of values
taken by $x-x'$. This set is shown by a black box in the plot.
The {\it truncation error}, the second term on the right-hand side of \eqref{alias_trunc},
arises due to limiting the Fourier integral to the finite box $[-mh,mh]^d$.
It is a tail sum of $\hk$ over the infinite lattice minus the finite box
that is summed computationally; see Fig.~\ref{f:discr}(b).
Once $h$ is determined by the aliasing error,
the truncation error may be made small by 
choosing $m$ such that the tail integral of $\hk(\xi)$ 
is small over the exterior of $[-mh, mh]$.

In practice, $h$ is set to the largest permissible value which achieves a certain aliasing error, then
$m$ is chosen according to the decay of $\hk$ to achieve a truncation error of the
same order.

We now apply the above to uniformly bound the error for approximating
two common kernels defined as follows \cite{rasmus1}.
Note that the value at the origin for both kernels is $k(\mbf{0})=1$,
appropriate for when the data has been scaled for unit prior covariance:
\bi
\item The squared exponential kernel with length scale $\ell$
  (using $|\cdot|$ for Euclidean norm),
\begin{equation}
  G_\ell(x) :=  \exp{\left(-\frac{|x|^2}{2\ell^2} \right)}\, .
  \label{Gker}
\end{equation}
\item
The Mat\'ern kernel with smoothness parameter $\nu\ge 1/2$ and length scale $\ell$,
\begin{equation}
  C_{\nu,\ell}(x) := \frac{2^{1-\nu}}{\Gamma(\nu)} \left(\sqrt{2\nu} \frac{|x|}{\ell} \right)^{\nu} K_{\nu} \biggl(  \sqrt{2\nu}\frac{|x|}{\ell}\biggl) \, ,
  \label{Cker}
\end{equation}
where $K_{\nu}$ is the modified Bessel function of the second kind.
\ei

\bfi   
\centering
\ig{width=\textwidth}{discr_lab}
\vspace{-5ex}
\ca{Illustration of two contributions to the discretization error in the pointwise approximation of the kernel $k(x)$, in $d=2$; see Section~\ref{s:discr}.
  Panel (a) shows the aliasing error term imaged as a function of the displacement
  argument $x$. The black square shows the domain $[-1,1]^2$.
  Panel (b) images $\hk(\xi)$ (on a logarithmic color scale) in the Fourier plane,
  and shows as dots the punctured infinite lattice of excluded Fourier frequencies
  $hj$, where $j\in\Z^2$, $j\notin J_m$, and $J_m$ is the $(2m+1)$-by-$(2m+1)$
  grid of quadrature nodes (not shown).
  The truncation error is bounded by the sum of $\hk$ at all dots.
  The parameters (chosen merely for visual clarity)
  are $h=0.85$, $m=4$, for a Mat\'ern kernel with $\nu=1/2$.
}{f:discr}
\efi

\subsection{Squared-exponential kernel}

Recall that for data points lying in $D=[0,1]^d$, the kernel $k(x)$ must be
well approximated over $x\in[-1,1]^d$.
The theorem below gives uniform bounds for the
two contributions to the error.
The result shows superexponential convergence both in $h$ (once $h<1$),
and in $m$.
In practice, for the typical case of $\ell\ll1$,
machine accuracy ($\approx 10^{-16}$) is
reached once $h$ is less than 1 by a few times $\ell$,
and $m$ is a couple times $1/\ell$.

In the proof below, the following elementary bounds on Gaussian sums are useful.
For any $a>0$, we have by monotonicity,
\be
\sum_{j=1}^\infty e^{-(a j)^2/2}
\le \int_0^\infty e^{-a^2 t^2/2} dt = \frac{\sqrt{\pi}}{2a}
\label{gau1}
\ee
and hence
\be
\sum_{j\in\Z} e^{-(a j)^2/2} = 1 + 2\sum_{j=1}^\infty e^{-(a j)^2/2} \le 1 + \frac{\sqrt{\pi}}{a}.
\label{gauZ}
\ee
We also need the Fourier transform of $G_\ell$ in \eqref{Gker},
using the convention \eqref{inv_ft},
\be
\hat{G}_\ell(\xi) = (\sqrt{2\pi}\ell)^d e^{-2|\pi\ell\xi|^2}
.
\label{Ghat}
\ee

\begin{theorem} 
  [Aliasing and truncation error for squared-exponential covariance kernel]
  \label{t:Gerr}
Suppose that $k(x) = G_\ell(x)$ as  defined by \eqref{Gker}, with length scale
$\ell\le 2/\sqrt{\pi} \approx 1.13$.
Let $h<1$ be the frequency grid spacing.
Then the aliasing error magnitude is bounded uniformly over $x \in [-1,1]^{d}$ by
\be
\Biggl|
\sum_{\substack{n \in \mathbb{Z}^{d} \\ n \neq \mbf{0}}} k\left(x + \frac{n}{h} \right)
\Biggr|
  \;\leq\;
  2d\,3^de^{-\frac{1}{2}\left( \frac{h^{-1}-1}{\ell} \right)^2}
    .
  \label{Galias}
  \ee
  In addition, letting $m\in\N$ control the grid size ($2m+1$ in each dimension),
  the truncation error magnitude is bounded uniformly over $x\in\R^d$ by
  \be
  \Biggl|
  h^d
\sum_{\substack{j \in \mathbb{Z}^{d} \\ j \notin J_m}}
 \hat{k}(j h)e^{2\pi i h \inner{j}{x}}
\Biggr|
  \;\leq\;
2d \,4^d e^{-2(\pi\ell h m)^2}
    .
  \label{Gtrunc}
  \ee
\end{theorem}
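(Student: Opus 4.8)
The plan is to treat the two error terms separately, exploiting in both cases that the squared-exponential kernel and its Fourier transform \eqref{Ghat} factorize across the $d$ coordinates, so the $d$-dimensional lattice sums reduce to products of one-dimensional Gaussian sums that are controlled by the elementary estimates \eqref{gau1} and \eqref{gauZ}. In both parts the combinatorial glue is the same union bound over which coordinate is ``exceptional,'' and the hypotheses $h<1$ and $\ell\le2/\sqrt\pi$ are precisely what render the per-dimension constants clean.

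I would start with the truncation bound \eqref{Gtrunc}, the easier of the two. Since $|e^{2\pi i h\inner{j}{x}}|=1$, the triangle inequality immediately removes all $x$-dependence and reduces the claim to bounding $h^d\sum_{j\notin J_m}|\hat G_\ell(jh)|$. Writing $\hat G_\ell(jh)=(\sqrt{2\pi}\ell)^d\prod_{l=1}^d e^{-\beta(j^{(l)})^2}$ with $\beta:=2\pi^2\ell^2h^2$ (so that $e^{-\beta m^2}=e^{-2(\pi\ell hm)^2}$ is exactly the target decay), the union bound $\{j\notin J_m\}=\bigcup_l\{|j^{(l)}|\ge m+1\}$ gives
\be
\sum_{j\notin J_m}\prod_{l} e^{-\beta(j^{(l)})^2}\;\le\;d\,\tau_m\,\tau^{\,d-1},
\ee
where $\tau:=\sum_{j\in\Z}e^{-\beta j^2}$ is the full one-dimensional sum and $\tau_m:=\sum_{|j|\ge m+1}e^{-\beta j^2}$ its tail. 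Absorbing the prefactor $P:=\sqrt{2\pi}\ell h$, it then suffices to show $P\tau\le4$ and $P\tau_m\le e^{-\beta m^2}$: the first follows from \eqref{gauZ} together with $h<1$ and $\ell\le2/\sqrt\pi$, and the second from factoring $e^{-\beta m^2}$ out of the tail (using $j^2\ge m^2+(j-m)^2$ for $j\ge m+1$) and applying \eqref{gau1}. Combining yields $d\,(P\tau_m)(P\tau)^{d-1}\le \tfrac{1}{\sqrt2}\,d\,4^{d-1}e^{-\beta m^2}$, comfortably below the claimed $2d\,4^d e^{-\beta m^2}$.

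For the aliasing bound \eqref{Galias} I would argue analogously. Factorizing $k(x+n/h)=\prod_l g(x^{(l)}+n^{(l)}/h)$ with $g(t)=e^{-t^2/2\ell^2}$, and using the union bound over the coordinate in which $n^{(l)}\neq0$, the sum over $n\neq\mbf0$ is bounded by $\sum_l r_l\prod_{l'\neq l}T_{l'}$, where $r_l:=\sum_{n\neq0}g(x^{(l)}+n/h)$ is the punctured one-dimensional sum and $T_{l'}:=\sum_{n\in\Z}g(x^{(l')}+n/h)$ the full one. For $r_l$, the constraint $|x^{(l)}|\le1$ gives $|x^{(l)}+n/h|\ge|n|/h-1$ and hence $(|n|/h-1)^2\ge b^2+((|n|-1)/h)^2$ with $b:=h^{-1}-1$, so I can pull out the factor $E:=e^{-b^2/2\ell^2}$ (which is exactly the target $e^{-\frac12((h^{-1}-1)/\ell)^2}$) and bound the remaining Gaussian series by \eqref{gau1}, obtaining $r_l\le4E$ uniformly over $x^{(l)}\in[-1,1]$.

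The delicate point—where I expect the main obstacle—is the clean uniform constant $T_{l'}\le3$. A crude comparison of the shifted periodized Gaussian with its integral yields only $T_{l'}\le 1+\sqrt{2\pi}\ell h<1+2\sqrt2\approx3.83$, and raising \emph{that} to the power $d-1$ in the union bound would overwhelm the target $3^d$ for large $d$. The fix is to observe that the $h^{-1}$-periodization $\Theta(u):=\sum_n g(u+n/h)$ is maximized at $u=0$: by Poisson summation \eqref{psf} its Fourier series has all-positive coefficients $h\,\hat g(jh)$, so $\Theta(u)\le\Theta(0)=\sum_n e^{-(n/h)^2/2\ell^2}$, and \eqref{gauZ} with $h<1$, $\ell\le2/\sqrt\pi$ gives $\Theta(0)\le1+\sqrt\pi\ell h\le3$. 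With $r_l\le4E$ and $T_{l'}\le3$ in hand, the union bound collapses to $d\cdot4E\cdot3^{d-1}=\tfrac43\,d\,3^dE\le2d\,3^dE$, as claimed; as in the truncation case, the factor $2d$ leaves ample slack.
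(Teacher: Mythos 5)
Your proposal is correct and follows essentially the same route as the paper: a union bound over the ``exceptional'' coordinate (the paper's $2d$ one-sided half-spaces versus your $d$ two-sided tails, which is the same bookkeeping), separation of the Gaussian across coordinates, the Poisson-summation positivity trick to anchor the full one-dimensional sums at the origin, and completing the square to extract the leading exponential before applying \eqref{gau1}--\eqref{gauZ}. The constants you obtain ($r_l\le 4E$, $T_{l'}\le 3$, $P\tau\le 4$) match the paper's and land within the stated prefactors, so no gap remains.
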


\begin{proof}[Proof of Theorem~\ref{t:Gerr}]
  We first bound the aliasing error,
  by
  exploiting the fact that it is uniformly bounded over $[-1,1]$ by
  its value at $(1,0,\dots,0)$.
  Noting that $G_\ell$ is positive and isotropic,
  the left side of \eqref{Galias}
  is bounded by $2d$ equal sums over overlapping half-space
  lattices (pointing in each of the
  positive and negative coordinate directions),
  \bea
  &&\max_{x\in[-1,1]^d}
  \Biggl|
  \sum_{\substack{n \in \Z^{d} \\ n \neq \mbf{0}}} G_\ell\left(x - \frac{n}{h}\right)
  \Biggr|
  \;\le\;
  2d \max_{x\in[-1,1]^d} \sum_{p=1}^\infty \sum_{q\in \Z^{d-1}} G_\ell\left(x - \frac{(p,q)}{h}\right)
  \nonumber
\\
&&\qquad=
2d \,
\biggl(\max_{s\in[-1,1]} \sum_{p=1}^\infty e^{-(s-p/h)^2/2\ell^2} \biggr)
\biggl(\max_{t\in[-1,1]} \sum_{q\in\Z} e^{-(t-q/h)^2/2\ell^2} \biggr)^{d-1}
\label{Gsep}
\eea
where in the second line we split off $s$ as the first coordinate of $x$, and
used separability of the Gaussian.
The sum over $q$ is bounded by its value for $t=0$,
because, by the Poisson summation formula \eqref{psf}
and \eqref{Ghat},
this sum is equal for any $t\in\R$
to $h \sum_{j\in\Z} e^{2\pi i t h j}\sqrt{2\pi}\ell e^{-2(\pi\ell h j)^2}$.
Then setting $t=0$, this sum is bounded by using $a=1/h\ell$ in \eqref{gauZ}
to give
\be
\max_{t\in[-1,1]} \sum_{q\in\Z} e^{-(t-q/h)^2/2\ell^2} \le
\sum_{q\in\Z} e^{-q^2/2h^2\ell^2} \le 1 + \sqrt{\pi}\ell h
.
\label{Zbnd}
\ee
However, the first sum over $p$ in \eqref{Gsep} is bounded
by its value at $s=1$, which can be seen because $h<1$ thus each term
is monotonically increasing in $s$.
Then by writing $(1-p/h)^2 = [(p-1)h^{-1} + (h^{-1}-1)]^2 =
(h^{-1}-1)^2  + (p-1)^2h^{-2} + 2(h^{-1}-1)h^{-1}(p-1)$ and
noting that the last term is nonnegative,
$$
\sum_{p=1}^\infty e^{-(1-p/h)^2/2\ell^2}
\;\le\;
e^{-\half\left(\frac{h^{-1}-1}{\ell}\right)^2}
  \sum_{p=1}^\infty e^{-(p-1)^2/2h^2\ell^2}
\;\le\;
e^{-\half\left(\frac{h^{-1}-1}{\ell}\right)^2}
\left(1 + \frac{\sqrt{\pi}\ell h}{2}\right),
$$
where \eqref{gau1} was used with $a = 1/h\ell$ in the last step.
Inserting this and \eqref{Zbnd} into \eqref{Gsep}
and using $1 + \sqrt{\pi}\ell h \le 3$, implied by the hypotheses
$h<1$ and $\ell \le 2/\sqrt{\pi}$, finishes the proof of \eqref{Galias}.

The proof of the truncation error bound is similar because $\hat{G}_\ell(\xi)$
in \eqref{Gker} is also Gaussian.
Because $\hk$ is always nonnegative, the
left side of \eqref{Gtrunc} is bounded by its value at $x=\mbf{0}$.
As with the aliasing error, we may now bound the sum over the punctured
lattice by that over $2d$ half-space lattices,
\bea
&&
h^d \sum_{\substack{j \in \mathbb{Z}^{d} \\ j \notin J_m}}
\hat{k}(j h)
\;=\;
(\sqrt{2\pi}\ell h)^d \sum_{\substack{j \in \mathbb{Z}^{d} \\ j \notin J_m}}
e^{-\half|2\pi\ell h j|^2}
\nonumber
\\
&&\le\;
2d (\sqrt{2\pi}\ell h)^d \biggl(\sum_{p>m} e^{-\half(2\pi\ell h p)^2} \biggr)
\biggl(\sum_{q\in\Z} e^{-\half(2\pi\ell h q)^2} \biggr)^{d-1}
\label{Ghatsep}
\eea
The $q$ sum is bounded by $1+1/(2\sqrt{\pi}\ell h)$,
by choosing $a=2\pi\ell h$ in \eqref{gauZ}.
The $p$ sum is bounded
by writing $p=m+j$, and dropping the nonnegative
last term in $p^2=(m+j)^2 =m^2+j^2 +2mj$,
then using \eqref{gau1}, so
$$
\sum_{p>m} e^{-\half(2\pi\ell h p)^2}
\;\le\;
e^{-2(\pi\ell h m)^2} \sum_{j=1}^\infty e^{-\half(2\pi\ell h j)^2}
\;\le\;
e^{-2(\pi\ell h m)^2} \left( 1 + \frac{1}{4\sqrt{\pi}\ell h} \right)
.
$$
Replacing $4$ by $2$ in the above, then
inserting these two bounds into \eqref{Ghatsep}
gives
$$
h^d \sum_{\substack{j \in \mathbb{Z}^{d} \\ j \notin J_m}}
\hat{k}(j h)
\;\le\;
2d\biggl(\sqrt{2\pi}\ell h + \frac{1}{\sqrt{2}} \biggr)^d
e^{-2(\pi\ell h m)^2}.
$$
The hypotheses $h<1$ and $\ell\le 2/\sqrt{\pi}$
guarantee that the factor taken to the $d$th power is no more than
$2\sqrt{2} + 1/\sqrt{2}<4$, proving \eqref{Gtrunc}.
\end{proof}

The above leads to the following simple rule to set $h$ and $m$ to guarantee a user-defined absolute kernel approximation error, in exact arithmetic.

\begin{cor}[Discretization parameters $(h,m)$ to guarantee uniform SE kernel
    accuracy $\eps$]
  \label{c:SEparams}
  Let $k=G_\ell$ be the SE kernel, and
  let $\ell\le 2/\sqrt{\pi}$ as above.
  Let $\eps>0$.
  Set $h \le \big(1 + \ell \sqrt{2 \log (4d\,3^d/\eps)}\big)^{-1}$
  then the aliasing error is no more than $\eps/2$.
  In addition, set $m \ge \sqrt{\half \log (4^{d+1}d/\eps)}/\pi\ell h$,
  then the truncation error is no more than $\eps/2$,
  so that $|\tilde k(x)-k(x)| \le \eps$ uniformly over $x \in [-1,1]^d$.
\end{cor}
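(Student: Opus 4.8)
The plan is simply to invert the two uniform bounds of Theorem~\ref{t:Gerr} against the prescribed error budget. By Proposition~\ref{p:alias_trunc}, the exact formula \eqref{alias_trunc} writes $\tilde k(x)-k(x)$ as the difference of the aliasing and truncation terms, so by the triangle inequality it suffices to force each of the two contributions to be at most $\eps/2$; the desired conclusion $|\tilde k(x)-k(x)|\le\eps$ then holds uniformly over $x\in[-1,1]^d$. Thus the corollary is obtained by solving, for $h$ and then for $m$ respectively, the inequalities that equate the right-hand sides of \eqref{Galias} and \eqref{Gtrunc} to $\eps/2$.

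For the aliasing term I would begin from \eqref{Galias} and require $2d\,3^d e^{-\frac{1}{2}((h^{-1}-1)/\ell)^2}\le\eps/2$. Taking logarithms gives $\tfrac12((h^{-1}-1)/\ell)^2\ge\log(4d\,3^d/\eps)$, and taking the positive square root (legitimate once $h<1$, so that $h^{-1}-1>0$) yields $h^{-1}-1\ge\ell\sqrt{2\log(4d\,3^d/\eps)}$, which is exactly the stated threshold $h\le(1+\ell\sqrt{2\log(4d\,3^d/\eps)})^{-1}$. Since the right-hand side of \eqref{Galias} decreases as $h$ decreases, any smaller $h$ also works, so the stated upper bound on $h$ is what is needed. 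I would note in passing that this prescribed $h$ automatically satisfies $h<1$, which is the very hypothesis required to invoke \eqref{Galias}.

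For the truncation term, with $h$ now fixed, I would repeat the identical inversion on \eqref{Gtrunc}: requiring $2d\,4^d e^{-2(\pi\ell h m)^2}\le\eps/2$ gives $e^{-2(\pi\ell h m)^2}\le\eps/(4^{d+1}d)$ (using $2\cdot 2d\cdot 4^d=4^{d+1}d$), hence $2(\pi\ell h m)^2\ge\log(4^{d+1}d/\eps)$ and $\pi\ell h m\ge\sqrt{\tfrac12\log(4^{d+1}d/\eps)}$, i.e.\ precisely $m\ge\sqrt{\tfrac12\log(4^{d+1}d/\eps)}/(\pi\ell h)$. Monotonicity of \eqref{Gtrunc} in $m$ shows any larger integer $m$ suffices as well. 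Adding the two half-$\eps$ estimates through \eqref{alias_trunc} completes the argument.

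There is no genuinely hard step here; the obstacle is purely bookkeeping. The only points requiring care are that the logarithm arguments be positive, which amounts to the practically harmless requirement that $\eps$ be smaller than the prefactors $4d\,3^d$ and $4^{d+1}d$, and the verification that the prescribed $h$ indeed lies below $1$ so that Theorem~\ref{t:Gerr} is applicable.
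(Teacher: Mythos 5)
Your proposal is correct and is exactly the argument the paper intends: the corollary is stated without proof as an immediate inversion of the two bounds in Theorem~\ref{t:Gerr}, split $\eps/2$ each via the decomposition \eqref{alias_trunc}, and your algebra (including the observation that the prescribed $h$ automatically satisfies $h<1$) checks out. No gap.
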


\subsection{Mat\'ern kernel}

In this section we provide proofs for the aliasing error and truncation error estimates 
for the Mat\'ern kernel given by \eqref{Cker}. Its Fourier transform is
\be
\hat{C}_{\nu,\ell}(\xi) = \hat{c}_{d,\nu} \left(\frac{\ell}{\sqrt{2\nu}} \right)^d \left(2\nu + |2\pi\ell\xi|^2  \right)^{-\nu-d/2},
\label{Crecall}
\ee
where $|\cdot|$, as before, denotes Euclidean norm, and where
the prefactor $\hat{c}_{d,\nu}$ is
\begin{equation}
  \hat{c}_{d,\nu} = \frac{2^d \pi^{d/2} (2\nu)^{\nu} \Gamma(\nu+d/2)}{\Gamma(\nu)} .
  \label{hatc}
\end{equation}
In order to prove the estimate for the aliasing error, we state some decay properties of the modified Bessel function $K_\nu(z)$
(using \cite[10.29 and 10.37]{dlmf}
\cite[\S 8.486]{gradshteyn2014table}).
For $z>0$, and fixed $\nu$, $K_{\nu}(z)$ is monotonically decreasing and positive.
For fixed $z$, the modified Bessel functions are monotonically increasing in $\nu$,
i.e. $K_{\nu}(z) \leq K_{\mu}(z)$ for $\mu\geq \nu$. Moreover, 
\begin{equation}
\frac{d}{dz} (z^{\nu} K_{\nu}(z)) = -z^{\nu} K_{\nu-1}(z) = -z^{\nu} \left( K_{\nu+1}(z) - \frac{2\nu}{z}K_{\nu}(z) \right) .
\end{equation}
Note that the positivity of $K_{\nu-1}(z)$ implies that $z^{\nu} K_{\nu}(z)$ is also a monotonically decreasing function of 
$z$. 
The monotonicity properties and the positivity of $K_{\nu}$ also imply that
\begin{equation}
  \frac{1}{z^{\nu} K_{\nu}(z)}\frac{d}{dz} (z^{\nu} K_{\nu}(z)) \;\leq\; -\frac{1}{2} \, ,
  \qquad \forall z\geq 4\nu,
\end{equation}
related to a special case in \cite[Lem.~3]{bachmayr20}.
Integrating the equation in $z$, we get the exponential upper bound
\begin{equation}
\label{eq:besselprop}
f_\nu(z) := 
z^{\nu} K_{\nu}(z) \;\leq\; f_{\nu} (4\nu) e^{2\nu} e^{-z/2}\, ,\qquad z\ge 4\nu.
\end{equation}
Noting that the Mat\'ern kernel is proportional to $f_\nu(\sqrt{2\nu}|x|/\ell)$,
this places a useful exponential decay bound on the kernel beyond
a few $\ell$ away from its origin.
Note that our bound is on
$f_\nu(z)$ rather than $K_\nu(z)$ as in \cite[Lem.~2]{bachmayr20},
at the cost of a lower bound on $z$ and halving the exponential rate.

In order to prove the estimate for the truncation error in the following 
theorem, we first
need the following lemma bounding power-law half-space lattice sums.
\begin{lem}
  \label{l:powersum}
  Let $\nu>0$, let $d\ge 1$, and let $m\ge 1$. Then
\be
I(d,\nu,m) := \sum_{n>m} \sum_{q \in \Z^{d-1}} \left( n^2 + |q|^2 \right)^{-\nu -d/2} \leq \frac{\beta(d,\nu)}{m^{2\nu}}
\ee
where for fixed $\nu$ the
prefactor $\beta$ obeys the following recursion relation in dimension $d$,
  \be
\beta(d,\nu) = \left\{\begin{array}{ll}
  \frac{1}{2\nu},& d=1\\
  \left(4 + \frac{2}{2\nu + d-1} \right) \beta(d-1,\nu), & d>1.
  \end{array} \right.
  \label{pre}
  \ee
  In particular, for any $\nu\ge 1/2$ we have
  \be
  \beta(d,\nu) \le \frac{5^{d-1}}{2\nu},  \qquad d=1,2,\dots
  \label{beta5}
  \ee
\end{lem}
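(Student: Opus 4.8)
The plan is to prove the bound by induction on the dimension $d$, establishing the recursion \eqref{pre} for $\beta$ along the way, and then to derive the explicit estimate \eqref{beta5} by bounding the resulting product. For the base case $d=1$ the inner sum over $q\in\Z^{0}$ is empty, so $I(1,\nu,m)=\sum_{n>m}(n^2)^{-\nu-1/2}=\sum_{n>m}n^{-2\nu-1}$; since $t\mapsto t^{-2\nu-1}$ is positive and decreasing, comparison with the integral gives $I(1,\nu,m)\le\int_m^\infty t^{-2\nu-1}\,dt=\frac{1}{2\nu}m^{-2\nu}$, which is exactly $\beta(1,\nu)/m^{2\nu}$ with $\beta(1,\nu)=1/(2\nu)$.

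For the inductive step I would peel off a single coordinate from the lattice $\Z^{d-1}$: writing $q=(q_1,q'')$ with $q_1\in\Z$ and $q''\in\Z^{d-2}$, and setting $s^2:=n^2+|q''|^2$, the summand becomes $(s^2+q_1^2)^{-\nu-d/2}$. The heart of the argument is the \emph{per-slice} estimate, valid for every $s\ge1$,
\be
\sum_{q_1\in\Z}\bigl(s^2+q_1^2\bigr)^{-\nu-d/2}
\;\le\;
\Bigl(4+\tfrac{2}{2\nu+d-1}\Bigr)\,(s^2)^{-\nu-(d-1)/2}.
\label{slice}
\ee
Here $s^2=n^2+|q''|^2\ge n^2\ge(m+1)^2\ge4$, so the hypothesis $s\ge1$ always holds. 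Summing \eqref{slice} over $n>m$ and $q''\in\Z^{d-2}$ collapses the right-hand side into $I(d-1,\nu,m)$ with the shifted exponent $-\nu-(d-1)/2$, giving $I(d,\nu,m)\le(4+\frac{2}{2\nu+d-1})\,I(d-1,\nu,m)$ and hence \eqref{pre}.

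To establish \eqref{slice}, write $P:=\nu+d/2$, so that $2P-1=2\nu+d-1$ and the target is $(s^2)^{-P+1/2}=s^{1-2P}$. I would treat the $q_1=0$ term separately, contributing $s^{-2P}$, and bound the remaining symmetric tail by comparison with an integral: since $t\mapsto(s^2+t^2)^{-P}$ decreases on $[0,\infty)$, one has $\sum_{q_1\ge1}(s^2+q_1^2)^{-P}\le\int_0^\infty(s^2+t^2)^{-P}\,dt$, with no boundary correction. Splitting this integral at $t=s$ and using $(s^2+t^2)^{-P}\le s^{-2P}$ on $[0,s]$ and $(s^2+t^2)^{-P}\le t^{-2P}$ on $[s,\infty)$ gives $\int_0^\infty(s^2+t^2)^{-P}\,dt\le s^{1-2P}\bigl(1+\frac{1}{2P-1}\bigr)$. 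Multiplying the whole slice sum by $s^{2P-1}$ and using $s^{-1}\le1$ then produces the constant $3+\frac{2}{2P-1}=3+\frac{2}{2\nu+d-1}$, comfortably within the stated factor $4+\frac{2}{2\nu+d-1}$.

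Finally, iterating the recursion gives $\beta(d,\nu)=\frac{1}{2\nu}\prod_{k=2}^{d}\bigl(4+\frac{2}{2\nu+k-1}\bigr)$. For $\nu\ge1/2$ and $k\ge2$ we have $2\nu+k-1\ge2$, so $\frac{2}{2\nu+k-1}\le1$ and each factor is at most $5$; with $d-1$ factors this yields $\beta(d,\nu)\le 5^{d-1}/(2\nu)$, which is \eqref{beta5}. The only genuine obstacle is the clean proof of the per-slice bound \eqref{slice}: obtaining a dimension-uniform constant requires the integral comparison against $\int_0^\infty$ rather than $\int_s^\infty$ (to dodge the off-by-one floor terms), together with the split at $t=s$ that isolates the $\frac{2}{2\nu+d-1}$ contribution; the remaining bookkeeping is routine.
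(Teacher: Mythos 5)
Your proposal is correct and follows essentially the same route as the paper: an induction on $d$ that peels off one lattice coordinate, splits the resulting one-dimensional sum at $|q_1|\approx s$ into a near part bounded termwise by $s^{-2P}$ and a far tail bounded by the integral of $t^{-2P}$, yielding the same recursion for $\beta(d,\nu)$ and the same product bound $5^{d-1}/(2\nu)$. The only (harmless) difference is that you bound the central block by the $q_1=0$ term plus $2\int_0^s(s^2+t^2)^{-P}\,dt$ instead of counting the $<4s$ lattice points directly, which in fact gives the slightly sharper constant $3+\tfrac{2}{2\nu+d-1}$.
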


\begin{proof} 
  We observe for the case $d=1$ (upper case in \eqref{pre}),
  \be
  \sum_{n>m} n^{-2\nu-1} \le \int_m^\infty y^{-2\nu-1} dy = \frac{m^{-2\nu}}{2\nu}
  \label{1sum}
  \ee
  where monotonic decrease of the function was used to bound the sum by an integral.
  Now for $d>1$,
  $$
  \sum_{n>m} \sum_{q \in \Z^{d-1}} \left( n^2 + |q|^2 \right)^{-\nu -d/2} = \sum_{n>m} \sum_{w \in \Z^{d-2}} \sum_{q \in \Z} \left( n^2 + |w|^2 + q^2 \right)^{-\nu -d/2}
  ,
  $$
  where in the case $d=2$ we abuse notation slightly: in that case the sum over
  $w$ is absent.
  We split the innermost sum into the central part
  $q \leq \lceil \sqrt{n^2 + |w|^2} \rceil$, 
  where $\lceil x \rceil$ denotes the smallest integer not less than $x$,
  plus the two-sided tail $q > \lceil \sqrt{n^2 + |w|^2} \rceil$.
  The central part
  contains at most $2(\sqrt{n^2+|w|^2}+1) +1 < 4\sqrt{n^2+|w|^2}$
  terms, where this upper bound follows since $n\ge 2$,
  and each such term is bounded by the constant $(n^2+|w|^2)^{-\nu-d/2}$.
  The two-sided tail
  is bounded by
  $2\sum_{q > \lceil \sqrt{n^2 + |w|^2} \rceil} (q^2)^{-\nu-d/2} \le
  2\int_{\sqrt{n^2+|w|^2}}^\infty y^{-2\nu-d} dy = (2\nu+d-1)^{-1} (n^2+|w|^2)^{-\nu-(d-1)/2}$.
  Combining both of these estimates, we get
 \bea
   I(d,\nu,m ) &\leq&
   \left( 4 + \frac{2}{2\nu + d-1}\right) \sum_{n>m} \sum_{w \in Z^{d-2}} \left( n^2 + |w|^2 \right)^{-\nu -(d-1)/2}
   \nonumber \\
   &=& \left( 4 + \frac{2}{2\nu + d-1}\right) I(d-1,\nu,m)
   . \nonumber
   \eea
Recursing down in $d$, we get \eqref{pre}, from which  \eqref{beta5} follows immediately.
\end{proof}

We now present the main result: uniform bounds on the
two contributions to the error for the Mat\'ern kernel.
The following shows exponential convergence with respect to $h$ for the
aliasing error, but
only order-$2\nu$ algebraic convergence with respect to $m$ for the truncation error.
The latter is due to the algebraic tail of $\hat{C}_{\nu,\ell}(\xi)$.

\begin{theorem}  
[Aliasing and truncation error for the Mat\'ern covariance kernel]
\label{t:Cerr}
Suppose $k(x) = C_{\nu,\ell}(x)$ as in \eqref{Cker},
with smoothness $\nu\ge 1/2$ and
length scale $\ell\le (\log 2)^{-1}\sqrt{\nu/2d}$.
Let $h \le (1+\sqrt{8\nu}\ell)^{-1}$ be the frequency grid spacing.
Then the aliasing error magnitude is bounded uniformly over $x \in [-1,1]^{d}$ by
\be
\Biggl|
\sum_{\substack{n \in \mathbb{Z}^{d} \\ n \neq \mbf{0}}} k\left(x + \frac{n}{h} \right)
\Biggr|
\;\leq\;
4d\, 3^{d-1} \cdot \frac{2^{1-\nu}}{\Gamma(\nu)}(4\nu)^\nu e^{2\nu} K_\nu(4\nu)
\cdot
e^{-\sqrt{\frac{\nu}{2d}}\frac{h^{-1}-1}{\ell}}
.
\label{Calias}
\ee
In addition, letting $m\in\N$ control the grid size ($2m+1$ in each dimension),
the truncation error magnitude is bounded uniformly over $x\in\R^d$ by
\be
\Biggl|
h^d
\sum_{\substack{j \in \mathbb{Z}^{d} \\ j \notin J_m}}
\hat{k}(j h)e^{2\pi i h \inner{j}{x}}
\Biggr|
\;\leq\;
\frac{\nu^{\nu-1}d\, 5^{d-1}}{2^\nu \pi^{d/2 + 2\nu}}
\frac{\Gamma(\nu+1/2)}{\Gamma(\nu)}
\frac{1}{(h \ell m)^{2\nu}}.
\label{Ctrunc}
\ee
\end{theorem}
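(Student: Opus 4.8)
The plan is to mirror the structure of the proof of Theorem~\ref{t:Gerr}, treating the aliasing and truncation terms separately and in each case reducing the punctured $d$-dimensional lattice sum to $2d$ overlapping half-space sums by exploiting positivity. The one genuinely new difficulty for the Mat\'ern kernel is that, unlike the Gaussian, it neither separates into a product of one-dimensional factors nor has a Gaussian Fourier transform. To get around this I would use the exponential decay bound \eqref{eq:besselprop} on $f_\nu(z)=z^\nu K_\nu(z)$ for the aliasing term, and the algebraic power-law lattice sum of Lemma~\ref{l:powersum} for the truncation term.

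For the aliasing bound \eqref{Calias}, first note that $C_{\nu,\ell}$ is positive and isotropic, so the left-hand side is dominated by $2d$ half-space sums of the form $\sum_{p\ge1}\sum_{q\in\Z^{d-1}}C_{\nu,\ell}(x-(p,q)/h)$. The hypothesis $h\le(1+\sqrt{8\nu}\ell)^{-1}$ guarantees $p/h - x^{(1)}\ge h^{-1}-1\ge\sqrt{8\nu}\ell$ for every such image, which is exactly the threshold $z=\sqrt{2\nu}|x-(p,q)/h|/\ell\ge 4\nu$ needed to invoke \eqref{eq:besselprop}; this replaces each kernel value by a constant multiple of $e^{-\sqrt{\nu/2}\,|x-(p,q)/h|/\ell}$, the constant being $\frac{2^{1-\nu}}{\Gamma(\nu)}(4\nu)^\nu e^{2\nu}K_\nu(4\nu)$. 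The isotropic exponential still does not separate, but the norm inequality $|v|\ge\|v\|_1/\sqrt d$ gives $e^{-\sqrt{\nu/2}\,|v|/\ell}\le\prod_l e^{-\sqrt{\nu/(2d)}\,|v^{(l)}|/\ell}$, which both explains the dimension-degraded rate $\sqrt{\nu/(2d)}$ in \eqref{Calias} and factorizes the sum into one-dimensional geometric sums. Writing $a:=\sqrt{\nu/(2d)}/\ell$, the hypothesis $\ell\le(\log2)^{-1}\sqrt{\nu/(2d)}$ forces $a\ge\log 2$, hence $e^{-a/h}<e^{-a}\le\tfrac12$; the single half-space factor is maximized at the corner $x^{(1)}=1$ and bounded by $2e^{-a(h^{-1}-1)}$, while each of the $d-1$ transverse full-line factors is bounded uniformly in $x$ by $(1+e^{-a/h})/(1-e^{-a/h})\le 3$. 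Multiplying the $2d$, the factor $2$, and the $3^{d-1}$ yields \eqref{Calias}.

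For the truncation bound \eqref{Ctrunc}, positivity of $\hat C_{\nu,\ell}$ lets me replace the oscillatory sum by its value at $x=\mbf0$, i.e. by $h^d\sum_{j\notin J_m}\hat C_{\nu,\ell}(jh)$. Substituting \eqref{Crecall} and discarding the positive additive constant $2\nu$ inside $(2\nu+|2\pi\ell hj|^2)^{-\nu-d/2}$ leaves the pure power law $(2\pi\ell h)^{-2\nu-d}|j|^{-2\nu-d}$. Bounding the punctured lattice by $2d$ half-space sums reduces the remaining sum to $2d\,I(d,\nu,m)$, which Lemma~\ref{l:powersum} bounds by $2d\,\beta(d,\nu)m^{-2\nu}\le d\,5^{d-1}m^{-2\nu}/\nu$. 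Collecting the prefactors $h^d$, $\hat c_{d,\nu}$ from \eqref{hatc}, $(\ell/\sqrt{2\nu})^d$, and $(2\pi\ell h)^{-2\nu-d}$ and simplifying then produces the algebraic rate $(h\ell m)^{-2\nu}$ together with the stated $\pi$- and gamma-function constant; alternatively, comparing the transverse lattice sums to the exact Beta integral $\int_{\R^{d-1}}(n^2+|q|^2)^{-\nu-d/2}\,dq$ supplies the factor $\Gamma(\nu+1/2)/\Gamma(\nu+d/2)$ directly.

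I expect the main obstacle to be the aliasing term. Because the kernel is not separable, the crux is twofold: (i) verifying that the $h$-hypothesis places \emph{every} lattice image beyond the threshold $z\ge4\nu$ where the clean exponential decay \eqref{eq:besselprop} is available, and (ii) recovering separability only \emph{after} the $\ell_1$--$\ell_2$ comparison, which is precisely the step that converts the radial rate $\sqrt{\nu/2}$ into the dimension-degraded rate $\sqrt{\nu/(2d)}$ and accounts for the loss in the exponent of \eqref{Calias}. The truncation term is comparatively routine once Lemma~\ref{l:powersum} is in hand; there the only real care is in the bookkeeping of the many multiplicative constants and in the choice to discard the additive $2\nu$ so as to reduce to a clean power-law lattice sum.
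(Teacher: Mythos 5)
Your proposal is correct and follows essentially the same route as the paper's proof: the same reduction of the punctured lattice to $2d$ half-space sums, the same use of the exponential bound \eqref{eq:besselprop} followed by the $\ell_1$--$\ell_2$ comparison and geometric series for the aliasing term, and the same reduction to Lemma~\ref{l:powersum} after discarding the additive $2\nu$ for the truncation term. The only cosmetic differences are that you bound the transverse sum directly by its explicit geometric-series value at $t=0$ where the paper invokes Poisson summation to locate the maximum, and you note an integral-comparison alternative for the truncation prefactor.
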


\begin{proof}[Proof of aliasing bound \eqref{Calias}]  
As with the squared-exponential case, we note that the sum over $n \in Z^{d}\setminus \{\mbf{0}\}$ is bounded by $2d$ half-spaces of the form $p\geq 1$, $q \in \Z^{d-1}$, with $n=(p,q)$.
Owing to the radial symmetry of the kernel, all of those half spaces can be bounded using the same estimate.
Since $C_{\nu,\ell}(x)$ is positive we may remove absolute value signs.
Substituting \eqref{Cker}, and splitting $x=(s,t)$ where $s$ is the first
coordinate and $t\in\R^{d-1}$,
\bea
&&\max_{x\in[-1,1]^d}
\Biggl|
\sum_{\substack{n \in \Z^{d} \\ n \neq \mbf{0}}} C_{\nu,\ell}\left(x - \frac{n}{h}\right)
\Biggr|
\;\le\;
2d
\max_{x\in[-1,1]^d}
  \sum_{p=1}^\infty \sum_{q\in \Z^{d-1}} C_{\nu,\ell}\left(x - \frac{(p,q)}{h}\right)
\nonumber
\\
&&\quad =\; 2d \frac{2^{1-\nu}}{\Gamma(\nu)} \cdot
\max_{s\in[-1,1], \,t\in[-1,1]^{d-1}}
\sum_{p=1}^\infty \sum_{q\in \Z^{d-1}}
f_\nu\biggl(\frac{\sqrt{2\nu}}{\ell}
\sqrt{(p/h - s)^2 + |q/h - t|^2}
  \biggr)
\nonumber
\\
&&\quad \le\;
2d \frac{2^{1-\nu}}{\Gamma(\nu)} f_\nu(4\nu) e^{2\nu}
\cdot
\!\!\!
\max_{s\in[-1,1], \, t\in[-1,1]^{d-1}}
\sum_{p=1}^\infty \sum_{q\in \Z^{d-1}}
\exp \biggl(
-\frac{\sqrt{\nu}}{\sqrt{2}\ell} \sqrt{(p/h - s)^2 + |q/h - t|^2}
\biggr)
\nonumber
\eea
where in the last step we applied the exponential decay bound
\eqref{eq:besselprop} to each term in the sum.
This is valid since no distance from the kernel origin (square root in the above)
is less than
$(h^{-1}-1)/\ell$, which is at least $\sqrt{8\nu}$ by the hypothesis on $h$.
We now lower-bound the square-root via
$\|y\|_2 \ge \|y\|_1 / \sqrt{d}$ for any $y\in\R^d$, which follows
from Cauchy--Schwarz.
The product now separates along dimensions, so the above is bounded by
\be
2d \frac{2^{1-\nu}}{\Gamma(\nu)} f_\nu(4\nu) e^{2\nu}
\cdot
\biggl(
\max_{s\in[-1,1]}
\sum_{p=1}^\infty
e^{-\sqrt{\frac{\nu}{2d}}\frac{1}{\ell} (p/h-s)}
\biggr)
\biggl(
\max_{t\in[-1,1]}
\sum_{q\in \Z}
e^{-\sqrt{\frac{\nu}{2d}}\frac{1}{\ell} |q/h-t|}
\biggr)^{d-1}
~.
\label{I1I2}
\ee
In the first sum each term is maximized at $s=1$,
so writing $p'=p-1$ we bound that sum geometrically by
$$
e^{-\sqrt{\frac{\nu}{2d}}\frac{h^{-1}-1}{\ell}}
\sum_{p'=0}^\infty e^{-\sqrt{\frac{\nu}{2d}}\frac{p'}{\ell h}}
\;\le\;
 \frac{e^{-\sqrt{\frac{\nu}{2d}}\frac{h^{-1}-1}{\ell}}}{1-e^{-\sqrt{\frac{\nu}{2d}}\frac{1}{\ell h}}}
\;\le\;
2 e^{-\sqrt{\frac{\nu}{2d}}\frac{h^{-1}-1}{\ell}}
$$
where in the last step we used the hypothesis
$\ell\le (\log 2)^{-1}\sqrt{\nu/2d}$ and $h<1$ to upper-bound the
geometric factor by $1/2$.

The second sum over $q$ in \eqref{I1I2} is bounded by its value for $t=0$,
because by the Poisson summation formula \eqref{psf}
it is equal for any $t\in\R$
to $h \sum_{j\in\Z} e^{2\pi i t h j}\frac{2\beta}{\beta^2+(2\pi h j)^2}$
where $\beta = \sqrt{\nu/2}\ell^{-1}$.
This relies on the Fourier transform
of $e^{-\beta|t|}$ being the everywhere-positive function
$\frac{2\beta}{\beta^2+(2\pi\xi)^2}$.
Thus we set $t=0$ in this second sum, write it as two geometric series
with geometric factor again at most $1/2$, which upper bounds the sum by $3$.
Substituting the above two sum bounds into \eqref{I1I2} proves \eqref{Calias}.
\end{proof}

\begin{proof}[Proof of truncation bound \eqref{Ctrunc}] 
  Now we use Lemma \ref{l:powersum} of half-space lattice sums
  to complete the proof of Theorem \ref{t:Cerr}.
  Noting that $\hat{k} = \hat{C}_{\nu,\ell}$
  from \eqref{Crecall} is always positive, we may drop the phases to get a uniform
  upper bound,
  \bea
  \Biggl| h^{d} \! \sum_{\substack{j \in \mathbb{Z}^{d} \\ j \not \in J_{m}}} \hat{C}_{\nu,\ell}(j h)e^{2\pi ih  \inner{j}{x}} \Biggr|
  &\le&
  h^{d}\!\!\sum_{j \in \Z^d \backslash J_m} \hat{C}_{\nu}(jh, \ell)
  =
  \hat{c}_{d,\nu} (h\ell)^d \!\!
  \sum_{j \in \Z^d \backslash J_m}(2\nu + |2\pi\ell h j|^2)^{-\nu-d/2}
  \nonumber
  \\
  &\le&
  \frac{\hat{c}_{d,\nu}}{(2\pi)^{2\nu+d}}
  (h\ell)^{-2\nu} \sum_{j \in \Z^d \backslash J_m} |j|^{-2\nu-d}
  \nonumber \\
  &\le&
  \frac{\hat{c}_{d,\nu}}{(2\pi)^{2\nu+d}}
  (h\ell)^{-2\nu} \cdot 2d \sum_{n>m} \sum_{q \in \Z^{d-1}} (n^2 + |q|^2)^{-\nu-d/2}
  \nonumber \\
  &=&
  \frac{\hat{c}_{d,\nu}}{(2\pi)^{2\nu+d}}  \frac{2d \,I(d,\nu,m)}{(h \ell)^{2\nu}}
  \;\le\;
  \frac{\hat{c}_{d,\nu}}{(2\pi)^{2\nu+d}} \frac{5^{d-1}}{2\nu} \frac{2d}{(h \ell m)^{2\nu}}
    \nonumber
  .
  \eea
  Here the third inequality follows from noting (similarly to the previous proofs)
  that the sum over $j \in \Z^{d} \backslash J_{m} $ is
  bounded by $2d$ lattice half-spaces of the form $n>m$, $q \in \Z^{d-1}$.
  The last inequality follows from Lemma~\ref{l:powersum}. Substituting \eqref{hatc}
  gives \eqref{Ctrunc}; the theorem is proved.
\end{proof}

As with the SE kernel, this theorem leads to a simple rule to set $h$ and $m$ to guarantee a user-defined absolute
kernel approximation error. In the following we restrict to small dimension, and
use that
the $\nu$-dependent middle factor in \eqref{Calias} never exceeds $3/8$,
and $(2\Gamma(\nu+d/2)/\nu\Gamma(\nu))^{1/2\nu} / \sqrt{2} < 1.6$ for $\nu\ge1/2$, $d\le 3$.

\begin{cor}[Discretization parameters $(h,m)$ to guarantee uniform Mat\'ern kernel
    accuracy $\eps$]
  \label{c:matpars}
  Let the dimension $d$ be 1, 2, or 3.
  Let $k=C_{\nu,\ell}$ be the Mat\'ern kernel with $\nu\ge 1/2$ and
  $\ell\le (\log 2)^{-1}\sqrt{\nu/2d}$ as above.
  Let $\eps>0$.
  Set $h \le \big(1 + \ell \sqrt{2d/\nu} \log (d\,3^d/\eps) \big)^{-1}$,
  then the aliasing error is no more than $\eps/2$.
  In addition, set
  $m \ge (d\,5^{d-1}/\pi^{d/2} \eps)^{1/2\nu} \cdot (1.6)\sqrt{\nu}/\pi h \ell$,
  then the truncation error is no more than $\eps/2$,
  so that the error obeys $|\tilde k(x)-k(x)| \le \eps$ uniformly over $x \in [-1,1]^d$.
\end{cor}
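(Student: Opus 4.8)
The plan is to invert the two uniform bounds of \Cref{t:Cerr}, solving the aliasing bound \eqref{Calias} for $h$ and the truncation bound \eqref{Ctrunc} for $m$, in each case demanding that the right-hand side be at most $\eps/2$. Summing the two halves and invoking the exact splitting of \Cref{p:alias_trunc} then yields the uniform error $\eps$. The only inputs beyond algebra are the two numerical facts quoted just above the statement: that the purely $\nu$-dependent middle factor $\frac{2^{1-\nu}}{\Gamma(\nu)}(4\nu)^\nu e^{2\nu}K_\nu(4\nu)$ in \eqref{Calias} never exceeds $3/8$ for $\nu\ge1/2$, and that $(2\Gamma(\nu+d/2)/\nu\Gamma(\nu))^{1/2\nu}/\sqrt2<1.6$ for $\nu\ge1/2$, $d\le3$; I take both as given.

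For the aliasing part I first absorb the middle factor into the prefactor: using the bound $3/8$ collapses $4d\,3^{d-1}\cdot(3/8)$ to $\tfrac12 d\,3^d$, so the aliasing magnitude is at most $\tfrac12 d\,3^d\exp(-\sqrt{\nu/2d}\,(h^{-1}-1)/\ell)$. Requiring this to be $\le\eps/2$, cancelling the factor $\tfrac12$, taking logarithms, and solving the resulting linear inequality for $h^{-1}$ gives exactly the stated $h\le\big(1+\ell\sqrt{2d/\nu}\,\log(d\,3^d/\eps)\big)^{-1}$.

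For the truncation part I set the right-hand side of \eqref{Ctrunc} equal to $\eps/2$ and solve for $m$. Raising to the power $1/2\nu$ and isolating $m$ leaves a constant of the form $\frac1\pi\big(\tfrac{2\nu^{\nu-1}}{2^\nu}\Gamma(\nu+d/2)/\Gamma(\nu)\big)^{1/2\nu}$ multiplying $(d\,5^{d-1}/\pi^{d/2}\eps)^{1/2\nu}/h\ell$; the clean simplification is $\nu^{\nu-1}=\nu^\nu/\nu$, whence the $(\nu/2)^\nu$ piece contributes a factor $\sqrt{\nu/2}$ under the $2\nu$-th root and the leftover $(2\Gamma(\nu+d/2)/\nu\Gamma(\nu))^{1/2\nu}$ is controlled by the quoted inequality. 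This produces the advertised $m\ge(d\,5^{d-1}/\pi^{d/2}\eps)^{1/2\nu}\cdot1.6\sqrt\nu/\pi h\ell$.

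The one genuine subtlety, and the step I expect to need the most care, is checking that the selected $h$ is admissible in \Cref{t:Cerr}, whose hypothesis demands $h\le(1+\sqrt{8\nu}\ell)^{-1}$. Comparing denominators, the stated $h$-threshold is admissible precisely when $\sqrt{2d/\nu}\,\log(d\,3^d/\eps)\ge\sqrt{8\nu}$, i.e. $\log(d\,3^d/\eps)\ge2\nu/\sqrt d$, which holds in the small-$\eps$ regime of interest. In the complementary (large-$\eps$) case one instead takes $h$ at the admissibility threshold $(1+\sqrt{8\nu}\ell)^{-1}$; since decreasing $h$ only sharpens \eqref{Calias}, and at that threshold the aliasing bound already equals $\tfrac12 d\,3^d e^{-2\nu/\sqrt d}\le\eps/2$ in exactly this case, the $\eps/2$ guarantee is retained. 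Thus taking $h$ to be the smaller of the two thresholds makes $h$ admissible throughout; \Cref{t:Cerr} then applies, both contributions are at most $\eps/2$, and \Cref{p:alias_trunc} gives $|\tilde k(x)-k(x)|\le\eps$ uniformly on $[-1,1]^d$.
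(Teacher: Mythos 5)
Your proposal is correct and is essentially the paper's (implicit) argument: the corollary is obtained by inverting the two bounds of \Cref{t:Cerr} using exactly the two numerical facts quoted just before its statement, and your algebra for both the $h$- and $m$-conditions checks out. Your additional care about the admissibility hypothesis $h\le(1+\sqrt{8\nu}\,\ell)^{-1}$ in the large-$\eps$ regime is a genuine (minor) edge case that the paper glosses over, and your resolution---taking $h$ to be the smaller of the two thresholds and noting that the aliasing bound at the admissibility threshold is already $\le\eps/2$ precisely in that regime---is sound.
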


Note that holding $\nu$, $\ell$ and $h$ fixed, $m = \bigO(1/\eps^{1/2\nu})$ as expected from truncating the Mat\'ern
Fourier transform with algebraic decay $1/|\xi|^{2\nu+d}$ (see \eqref{Crecall}).
Instead holding tolerance $\eps$ fixed, $m=\bigO(1/\ell)$ as $\ell\to 0$,
as expected from the growing
number of oscillations in the interpolant across the linear extent of the domain.
The above corollary should be compared with \cite[(1.11)]{bachmayr20},
where $\gamma$ plays the role of $h^{-1}$.
In order to minimize the $m$ used in practice,
instead of the above rigorous parameters choices
we recommend more forgiving heuristics that we state in the next section.

\begin{rmk}
  Both Theorems \ref{t:Gerr} and \ref{t:Cerr} have the very mild restrictions that $\ell$
  be smaller than some $\bigO(1)$ constant. These are in practice irrelevant
  because the domain $D$ is also of size 1 in each dimension,
  and in all applications known to us $\ell$ is set substantially smaller than the
  domain size (otherwise the prior covariance is so long-range that the regression output would be nearly constant over the domain).
\end{rmk}
\begin{rmk}
  Theorems \ref{t:Gerr} and \ref{t:Cerr} have all prefactors explicit.
  It may be possible to improve the prefactors of the
  form $d c^d$ where $2\le c \le 5$,
  since these are due to overcounting where half-spaces overlap
  and bounds on sums over $\Z^{d-1}$ that could be improved.
  The $1/\sqrt{d}$ factor in the exponential in \eqref{Calias} might also be removable
  by using {\em partial} Poisson summation.
\end{rmk}

\bfi   
    {\hspace{-.6in}\ig{width=1.2\textwidth}{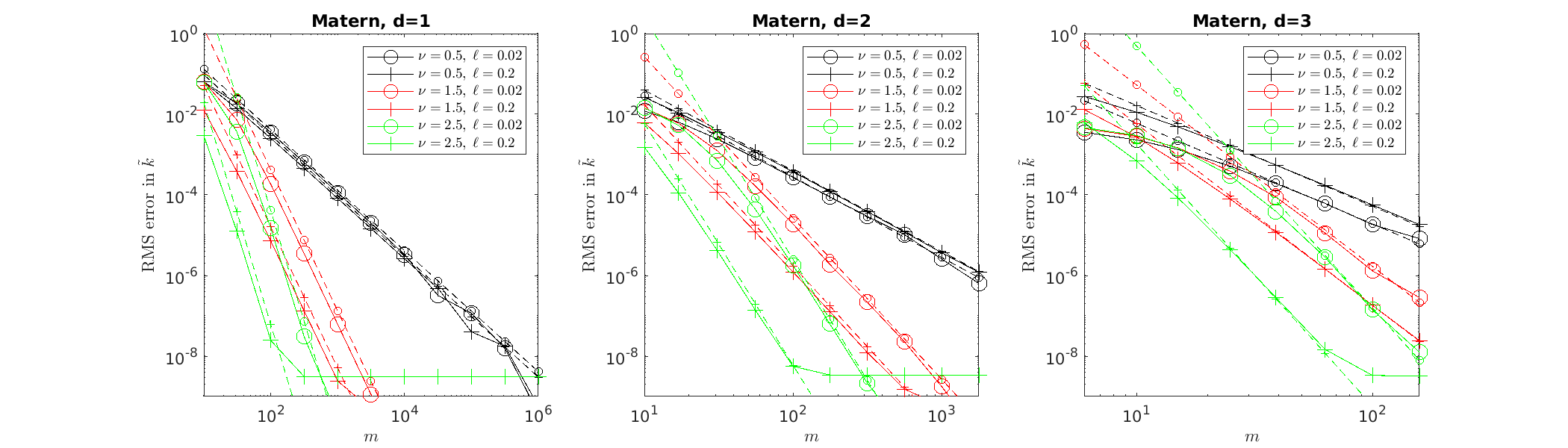}}
\ca{Estimated root mean square approximation error for the Mat\'ern kernel
  in dimensions $d=1,2,3$ with various parameters, compared to the heuristic
  $\tilde{\eps}$ of \eqref{Kheur} with prefactor as in Remark~\ref{r:heur}.
  The proposed equispaced Fourier basis is used.
  RMS error is found via high-order accurate quadrature
  for the double integral \eqref{doubleint}
  rewritten as $\int_{[-1,1]^d} v(z) |\tilde k(z)-k(z)|^2 dz$ where $v$ is the
  autocorrelation of $\rho$, for the choice $\rho\equiv 1$ in $D$.
  For each choice of $\nu$ and $\ell$, the
  solid line shows the error, while the dotted line shows the heuristic.
  $h$ was chosen via \eqref{hheur} to achieve aliasing error $\eps=10^{-8}$.
}{f:materr}
\efi

\section{Mat\'ern covariance matrix approximation error} 
\label{s:frob}

For sufficiently non-smooth Mat\'ern kernels, such as $\nu\le 3/2$,
the uniform truncation error bound \eqref{Ctrunc} dominates and gives slow algebraic
convergence $\bigO(1/m^{2\nu})$, due to slow decay in Fourier space.
Yet we have observed that in practice this bound is overly pessimistic
when it comes to the more relevant
{\em root mean square} error of covariance matrix elements,
leading to wasted computational effort.
We instead propose (and use \cite[Sec.~4.2]{efgp_comput})
the following heuristic with faster convergence $\bigO(1/m^{2\nu+d/2})$.
\begin{conj}[equispaced Fourier Mat\'ern covariance matrix error]
  \label{c:Kheur}
  Let the points $x_1,\dots,x_N$ be iid drawn from some bounded
  probability density function $\rho$ with support in $D=[0,1]^d$.
  Let the Mat\'ern kernel with parameters $\nu$ and $\ell$ be approximated
  by equispaced Fourier modes as in Theorem~\ref{t:Cerr}, with $h$ and $m$
  chosen so that the aliasing error is negligible compared to the truncation error.
  Then with high probability as $N\to\infty$,
  \be
  \| \tilde K - K \|_F \le N \tilde{\eps},
  \qquad \mbox{the root mean square error being} \quad  \tilde{\eps} = \frac{\tilde{c}_{d,\nu,\rho}}{\ell^{2\nu} (hm)^{2\nu+d/2}},
  \label{Kheur}
  \ee
  for some constant $\tilde{c}_{d,\nu,\rho}$ independent of $N$, $\ell$, $h$, and $m$.
\end{conj}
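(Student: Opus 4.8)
The plan is to reduce the Frobenius norm to a \emph{deterministic} weighted $L^2$ kernel-approximation error and then estimate that error in Fourier space. Write $e(z) := \tilde{k}(z) - k(z)$ for the pointwise kernel error, so that
\[
\| \tilde K - K \|_F^2 \;=\; \sum_{i,j=1}^N |e(x_i - x_j)|^2 .
\]
Up to the diagonal $i=j$ contribution $N|e(\mbf{0})|^2 = \bigO(N)$, which is negligible against the expected $\bigO(N^2)$ size, the right-hand side is $N^2$ times an empirical average of $|e(x_i-x_j)|^2$ over pairs of iid draws. First I would invoke a law of large numbers for this degree-two (V-statistic) double sum to conclude that, with high probability as $N\to\infty$,
\[
\frac{1}{N^2}\| \tilde K - K \|_F^2 \;\approx\; \int_{[0,1]^d}\!\!\int_{[0,1]^d} |e(x-x')|^2\, \rho(x)\rho(x')\,dx\,dx' \;=\; \int_{[-1,1]^d} v(z)\,|e(z)|^2\,dz ,
\]
where the change of variables $z=x-x'$ introduces the autocorrelation $v(z) := \int \rho(x)\rho(x-z)\,dx$, which is supported in $[-1,1]^d$ and satisfies $0\le v\le \|\rho\|_\infty$. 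This identifies $\tilde{\eps}^2$ in \eqref{Kheur} with exactly the weighted $L^2$ quantity quadratured in Fig.~\ref{f:materr}.

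It then remains to estimate this weighted $L^2$ integral, and here lies the mechanism behind the improved rate. Since $h,m$ are chosen so that aliasing is negligible, Proposition~\ref{p:alias_trunc} lets me replace $e$ by the truncation term $e_{\mathrm{tr}}(z) := h^d\sum_{j\notin J_m}\hat{k}(jh)\,e^{2\pi i h\inner{j}{z}}$, and bounding $v\le\|\rho\|_\infty$ reduces the task to $\int_{[-1,1]^d}|e_{\mathrm{tr}}|^2\,dz$. As a Fourier series of period $1/h$, Parseval over one period cell gives $\int_{\mathrm{cell}}|e_{\mathrm{tr}}|^2 = h^d\sum_{j\notin J_m}|\hat{k}(jh)|^2$, and since $[-1,1]^d$ meets only $\bigO(1)$ period cells this controls the integral over $[-1,1]^d$ up to a constant. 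The decisive observation is that the RMS error is governed by the \emph{$\ell^2$} tail sum $\sum_{j\notin J_m}|\hat{k}(jh)|^2$, whereas the uniform truncation bound \eqref{Ctrunc} rests on the larger \emph{$\ell^1$} tail sum $\sum_{j\notin J_m}|\hat{k}(jh)|$. I would estimate the $\ell^2$ sum by the integral $h^{-d}\int_{\xi\notin[-mh,mh]^d}|\hat{k}(\xi)|^2\,d\xi$ and insert the algebraic decay $|\hat{C}_{\nu,\ell}(\xi)|\sim \mathrm{const}\cdot \ell^{-2\nu}|\xi|^{-2\nu-d}$ read off from \eqref{Crecall}; the tail integrates to order $(mh)^{-4\nu-d}$, whose square root delivers the conjectured $\tilde{\eps} = \bigO\big(\ell^{-2\nu}(hm)^{-2\nu-d/2}\big)$. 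This $\ell^2$-versus-$\ell^1$ replacement is exactly what buys the extra factor $m^{d/2}$ over \eqref{Ctrunc}.

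The hardest part is making the probabilistic reduction rigorous with an explicit high-probability statement. The double sum is a degree-two V-statistic whose summands are dependent (pairs share indices), so concentration must be obtained from a variance bound: one shows $\var\big(\sum_{i,j}|e(x_i-x_j)|^2\big) = \bigO(N^3)$ (finite because $e$ is bounded), whence $\var/(\text{mean})^2 = \bigO(1/N)$ and Chebyshev yields the claimed convergence. While this is standard in outline, pinning down the sharp constant $\tilde{c}_{d,\nu,\rho}$ is delicate: it requires showing that the off-diagonal, non-orthogonal cross-terms produced by integrating the exponentials over the \emph{fixed} box $[-1,1]^d$ (rather than over an exact period) are genuinely subdominant, and that any residual mild $h$-dependence hidden in the prefactor does not disturb the stated order. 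These are the reasons the result is offered as a conjecture supported by the heuristic above together with the numerical agreement in Fig.~\ref{f:materr}, rather than as a theorem.
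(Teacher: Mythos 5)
Your derivation is correct as a heuristic and follows the same skeleton as the paper's justification: reduce $\|\tilde K - K\|_F^2$ to (a concentration of) the expected weighted $L^2$ kernel error \eqref{doubleint}, discard aliasing, and observe that the root-mean-square truncation error is governed by the $\ell^2$ tail sum $\sum_{j\notin J_m}|\hat{k}(hj)|^2$ rather than the $\ell^1$ tail underlying \eqref{Ctrunc}---this is indeed the source of the extra $(hm)^{-d/2}$. The one step you do differently is the passage from the weighted $L^2$ integral to the Fourier tail. The paper keeps the autocorrelation weight and applies the Wiener--Khintchine theorem, arriving at the double sum $h^{2d}\sum_{j,j'\notin J_m}\hat{k}(hj)\hat{k}(hj')|\hat\rho(h(j-j'))|^2$ in \eqref{WK}, and then argues from the mean-square decay $\hat\rho(\xi)=\bigO(|\xi|^{-(1+d)/2})$ that the near-diagonal terms dominate. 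You instead bound the autocorrelation $v$ by $\|\rho\|_\infty$ and apply Parseval over the $\bigO(1)$ period cells of side $1/h$ that meet $[-1,1]^d$; since $|e_{\mathrm{tr}}|^2\ge 0$ this yields a clean and fully rigorous upper bound of the right order without any assumption on $\hat\rho$ beyond boundedness of $\rho$---arguably simpler and more robust than the paper's route, at the cost of discarding the $\rho$-dependence of the constant that the paper's form retains (and that Remark~\ref{r:heur} fits numerically for $\rho\equiv 1$). Your treatment of the probabilistic step (a V-statistic variance bound of $\bigO(N^3)$ plus Chebyshev) is also more explicit than the paper's informal appeal to the central limit theorem. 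In both versions the genuinely open part is the same---a rigorous high-probability statement with a sharp constant---which is why the paper offers the result only as a conjecture.
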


\begin{justification}
Regardless of the kernel or its approximation method, the expectation (over
data point realizations) of the squared Frobenius norm is
\be
\mathbb{E} \|\tilde K- K\|^2_F =
\mathbb{E} \sum_{n,n'=1}^N |\tilde K_{n,n'}- K_{n,n'}|^2
= N^2 \int_D \int_D  |\tilde k(x-x')-k(x-x')|^2 \rho(x)\rho(x') dx dx'.
\label{doubleint}
\ee
Now substituting the dominant truncation part of the pointwise error formula
\eqref{alias_trunc}, and changing variable to $z=x-x'$ which ranges over the set $D-D = [-1,1]^d$,
with $dxdx' = dz dx'$,
we get
\bea
\mathbb{E} \|\tilde K- K\|^2_F &\approx &
N^2 h^{2d}\sum_{j,j' \notin J_m} \hat k(hj) \hat k(hj')
\int_{[-1,1]^d} \!\!\! e^{2\pi i \inner{h(j-j')}{z}}
  \left(\int_D \rho(z+x')\rho(x')dx'\right)
  dz
  \nonumber \\
  &=&
  N^2 h^{2d}\sum_{j,j' \notin J_m} \hat k(hj) \hat k(hj') \, |\hat\rho(h(j-j'))|^2
  ,
\label{WK}
\eea
where the last step used the Wiener--Khintchine theorem for the Fourier transform of
the autocorrelation of $\rho$.
In a mean-square sense with respect to angle we expect Fourier decay
$\hat\rho(\xi) = \bigO(1/|\xi|^{(1+d)/2})$,
even if $\rho$ has discontinuities
(e.g., see \cite{brandolini03} for the case of $\rho\equiv 1$ in $D$,
and we may approximate $\rho$ by a linear combination of
such characteristic functions of convex sets).
Since $|\hat\rho(hj)|^2$ is then summable over $j\in\Z^d$,
and the small $j,j'$ terms dominate (as in the Gibbs phenomenon),
we expect that there is a constant $c_\rho>0$ independent of $m$ such that
$$
h^{2d}\sum_{j,j' \notin J_m} \hat k(hj) \hat k(hj') \, |\hat\rho(h(j-j'))|^2
\; \le \; c_\rho h^{2d}\sum_{j \notin J_m} |\hat k(hj)|^2
\; = \; \bigO\bigl(1/\ell^{4\nu}(hm)^{4\nu + d}\bigr)
,
$$
 where in the last step we used the decay
 of $\hat k(hj)$ from \eqref{Crecall},
 with the sum losing one power of $d$ as in the proof of Theorem \ref{t:Cerr}. Finally, by the central limit theorem we expect, with high probability
as $N\to\infty$, that $\|\tilde K - K\|_F^2$ tends to its expectation,
justifying \eqref{Kheur}.
\end{justification}
A rigorous proof of the conjecture, even for the easiest case $\rho\in C_0^\infty(D)$,
is an open problem.
We note that related work exists in the variational GP setting \cite{burt19rates}.
Although the iid assumption on data points cannot be justified
in many settings (e.g., satellite data), we find the
conjecture very useful to set numerical parameters even in such cases.
We summarize the resulting
empirically good parameter choices in the following remark.

\begin{rmk}[Discretization parameters $(h,m)$ to achieve empirical root-mean-square
    Mat\'ern kernel accuracy $\eps$]
  \label{r:heur}
  By numerical study of the constant-density case $\rho\equiv 1$ in the domain $D=[0,1]^d$,
  we fit the prefactor $\tilde{c}_{d,\nu,1} \approx 0.15/\pi^{\nu+d/2}$ in \eqref{Kheur}.
  Figure~\ref{f:materr} shows that this truncation
  prediction matches to within a fraction of a
  decimal digit the estimated root mean square error $\tilde\eps$.
  Inverting this gives our proposed numerical grid size choice
  \be
  m \;\approx\;
  \frac{1}{h}\biggl( \pi^{\nu+d/2} \ell^{2\nu} \frac{\eps}{0.15}\biggr)^{-1/(2\nu + d/2)}
  \label{mheur}
  \ee
  to achieve root-mean square truncation error around the given $\eps$.
  The scaling $m = \bigO(1/\eps^{1/(2\nu+d/2)})$ is more
  forgiving than the rigorous $\bigO(1/\eps^{1/2\nu})$ of Corollary~\ref{c:matpars},
  resulting in a smaller grid.
  For instance, for $\nu=1/2$ this lowers $M = \bigO(m^d)$, the total number of modes
  to achieve a Frobenius norm of $N\eps$, from
  $M=\bigO(1/\eps^{d})$ to $M=\bigO(1/\eps^{2d/(2+d)})$,
  a significant reduction in numerical effort when $d$ is ``large'' (eg $3$).
  
We also find that a practical choice of $h$ to bound aliasing error by a given
$\eps$ is
\be
h \;\approx\; \bigl(1+0.85 (\ell/\sqrt{\nu}) \log 1/\eps \bigr)^{-1},
\qquad 1/2\le \nu \le 5/2.
\label{hheur}
\ee
This is also verified (for a single $\eps$ choice)
by the saturation of error at a minimum around $10^{-8}$ in 
Figure~\ref{f:materr}.
This $h$ is larger than that in Corollary~\ref{c:matpars}, allowing
$M$, hence the computation time, to be further reduced.
\end{rmk}

We provide extensive numerical experiments using these parameter choices 
in \cite{efgp_comput}, but do not dwell on them further, since
the meat of the present work is the rigorous analysis.

\begin{figure*}[t]
\centering
\begin{tikzpicture}[scale=0.8]
\centering
\begin{axis}[
    ymin= -1, ymax=8,
    ytick={-2, -1, 1, 3, 5, 7},
    xlabel=$\log_{10}$ N, 
    ylabel=$\log_{10} \kappa$, 
    legend pos = south east,
  ]
  
\addplot[color = red, mark=square*, line width=0.1mm]
    coordinates
    {
(1, 1.44)
(2, 2.43)
(3, 3.43)
(4, 4.43)
(5, 5.43)
(6, 6.43)
    };
    
\addplot[color = blue, mark=square*, line width=0.1mm]
    coordinates 
    {
(1, 2.05)
(2, 3.05)
(3, 4.05)
(4, 5.05)
(5, 6.05)
(6, 7.05)
    };

\addplot[color = black, mark=square*, line width=0.1mm]
    coordinates 
    {
(1, 1.25)
(2, 2.43)
(3, 3.43)
(4, 4.43)
    };

\addplot[color = green, mark=square*, line width=0.1mm]
    coordinates 
    {
(1, 1.25)
(2, 2.43)
(3, 3.43)
(4, 4.43)
    };

\legend{ $\kappa_\tbox{WS}$, Upper bound (\ref{kappa_bd}), $\kappa_\tbox{FS}$, $\kappa(K + \sigma^2I)$}
\end{axis}
\end{tikzpicture}
\caption{Condition numbers of the weight-space system matrix
  $A_\tbox{WS} := \Xt\X+\sigma^2 I$, the
  approximate function-space system matrix $A_\tbox{FS}:=\X\Xt + \sigma^2 I$, and
  the exact function-space matrix $K + \sigma^2 I$, as a 
  function of the number of data points $N$, for $d=1$.
  The $\kappa_\tbox{FS}$ curve (black) lies completely under the exact $\kappa$ (green)
  curve.
  The data points are
  uniform random on $[0, 1]$, for a squared-exponential kernel with
$\ell = 0.1$, and noise $\sigma = 0.3$.}
\label{fig:cond1}
\end{figure*}
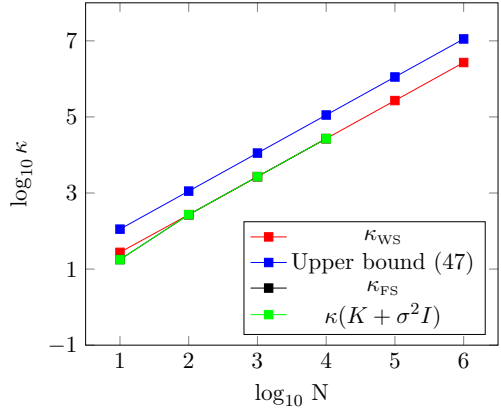

\begin{figure*}[ht]
\begin{subfigure}{0.32 \linewidth}
  \centering
  \begin{tikzpicture}
  \node (img)  {\includegraphics[scale=0.4]{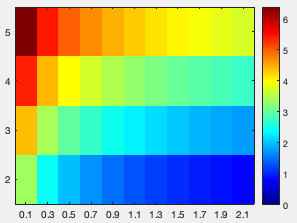}};
  \node[below=of img, node distance=0cm, yshift=1cm,font=\color{black}] {$\sigma$};
  \node[left=of img, node distance=0cm, rotate=90, anchor=center, yshift=-0.7cm,font=\color{black}] {$\log_{10}(N)$};
 \end{tikzpicture}
\caption{$\log_{10} \kappa_\tbox{WS}$}
\end{subfigure}
\begin{subfigure}{0.32 \linewidth}
  \centering
  \begin{tikzpicture}
  \node (img)  {\includegraphics[scale=0.4]{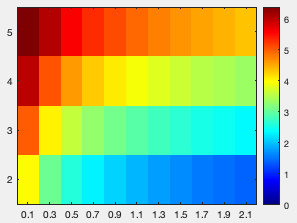}};
  \node[below=of img, node distance=0cm, yshift=1cm,font=\color{black}] {$\sigma$};
 \end{tikzpicture}
\caption{$\log_{10}$ of bound (\ref{kappa_bd})}
\end{subfigure}
\begin{subfigure}{0.32 \linewidth}
  \centering
  \begin{tikzpicture}
  \node (img)  {\includegraphics[scale=0.4]{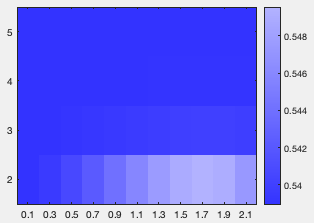}};
  \node[below=of img, node distance=0cm, yshift=1cm,font=\color{black}] {$\sigma$};
 \end{tikzpicture}
\caption{Ratio of $\kappa_\tbox{WS}$ to bound (\ref{kappa_bd})}
\end{subfigure}
\caption{Panel (a) shows the condition number of the weight space system matrix
  $\Xt\X+\sigma^2 I$ for various $N$ and $\sigma^2$.
  (b) shows its upper bound (in the limit $\eps\to0$).
  (c) plots the ratio between the two, which is nearly constant at around 0.54.
  Other parameters are as in Figure~\ref{fig:cond1}. }
\label{fig:cond_heatmaps}
\end{figure*}

\section{Conditioning of function-space and weight-space systems}
\label{s:cond}

In~\cite{efgp_comput} it was observed that the iteration count for conjugate gradient solution with EFGP often grew with the number of data points,
and alarmingly so at smaller tolerance $\eps$.
To grapple with this,
in this final section we present some preliminary analysis that applies
to {\em any} approximate-factorization GP regression method,
connect the weight-space and function-space linear system condition numbers,
and perform a numerical study in the EFGP case.
We do not address preconditioning, but note that it has been beneficial in the GP
context \cite{steinprecond,gardner1,wang19exact}.

Recall that ``exact'' GP regression requires a solution to the function space
linear system
\begin{equation}
(K + \sigma^2 I) \bal = \mbf{y} ,
\end{equation}
and that GP regression using an approximate factorization of the kernel,
in function or weight space, requires solutions to
linear systems with system matrices
\begin{equation} 
A_{\tbox{FS}} = \X\Xt + \sigma^2 I , \qquad A_{\tbox{WS}} = \Xt\X + \sigma^2 I,
\end{equation}
respectively,
where $\X$ is some $N$-by-$M$ design matrix with $\X\Xt=\tilde K \approx K$.
In the special case of EFGP we described the matrix $\Phi$ in
Section~\ref{s:efgp}.
From now we assume that the data size $N$ is large
enough so that $N>M$.

We start with a simple bound for the exact GP regression function space system.
\begin{pro}[Exact function space condition number bound]\label{p:cond}
  Let $k: \R^d \to\R$ be a translationally invariant positive semidefinite
  covariance kernel with $k(\mbf{0})=1$.
  Let $x_1,\dots,x_N\in\R^d$, 
  and $K$ be the $N\times N$ covariance matrix with $ij$th element $k(x_i-x_j)$.
  Then the condition number of the GP function space system matrix obeys
  \be\label{kappa_bd}
  \kappa(K+\sigma^2 I) \; \le \; \frac{N}{\sigma^2} + 1.
  \ee
\end{pro}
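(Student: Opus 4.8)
The plan is to work directly with eigenvalues, exploiting that $K$ is positive semidefinite and that its diagonal is pinned by the normalization $k(\mbf{0})=1$. Since $K$ is symmetric positive semidefinite, it has a full set of real nonnegative eigenvalues $\lambda_1\ge\cdots\ge\lambda_N\ge 0$, and adding $\sigma^2 I$ merely shifts each of these by $\sigma^2$. Thus the eigenvalues of $K+\sigma^2 I$ are $\lambda_i+\sigma^2$, the matrix is symmetric positive definite, and its condition number is
\[
\kappa(K+\sigma^2 I) \;=\; \frac{\lambda_1+\sigma^2}{\lambda_N+\sigma^2}.
\]

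For the denominator I would simply discard the (nonnegative) smallest eigenvalue, giving the clean lower bound $\lambda_N+\sigma^2 \ge \sigma^2$. For the numerator the one substantive step is to bound the top eigenvalue $\lambda_1=\lambda_{\max}(K)$ from above. Here I would use the trace: because all eigenvalues are nonnegative, $\lambda_1 \le \sum_{i} \lambda_i = \mathrm{tr}(K)$, and since every diagonal entry equals $K_{ii}=k(x_i-x_i)=k(\mbf{0})=1$, we have $\mathrm{tr}(K)=N$. Hence $\lambda_1 \le N$. (One could instead invoke $\|K\|\le\|K\|_F\le N$ using $|K_{ij}|\le k(\mbf{0})=1$, but the trace argument is the most economical.)

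Combining these two estimates yields
\[
\kappa(K+\sigma^2 I) \;\le\; \frac{\lambda_1+\sigma^2}{\sigma^2} \;=\; \frac{\lambda_1}{\sigma^2}+1 \;\le\; \frac{N}{\sigma^2}+1,
\]
which is exactly \eqref{kappa_bd}. There is no genuine obstacle in this argument; the only point worth flagging is that the bound is essentially tight in the worst case (for instance when all data points coincide, $K$ is rank one with $\lambda_1=N$), so the proposition is best read as a sharp upper bound, which is precisely what motivates the ensuing discussion of when the function-space system is genuinely ill-conditioned.
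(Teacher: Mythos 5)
Your proof is correct and follows essentially the same route as the paper: bound $\lambda_{\max}(K)\le N$, note $\lambda_{\min}(K+\sigma^2 I)\ge\sigma^2$ by positive semidefiniteness, and take the ratio. The only difference is in the top-eigenvalue step, where the paper uses $\|K\|\le\|K\|_F\le N$ after first arguing via Fourier positivity that $|k(x)|\le k(\mbf{0})=1$ entrywise, whereas your trace bound $\lambda_1\le\mathrm{tr}(K)=N$ needs only the unit diagonal and nonnegativity of the eigenvalues---a slightly more economical variant of the same argument.
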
  
\begin{proof}
  Since $k$ is a positive semidefinite kernel, meaning $\hat{k}$ is nonnegative
\cite[\S4.1]{rasmus1},
  then
  $|k(x)| = |\int_{\R^d} e^{2\pi i\inner{x}{\xi}} \hat{k}(\xi) d\xi | \le \int_{\R^d} \hat{k}(\xi) d\xi = k(\mbf{0}) = 1$.
  Thus all entries of $K$ are bounded in magnitude by $1$,
  so $\|K\|_F \le N$.
  Since the spectral norm is bounded by the Frobenius norm,
  the largest eigenvalue of $K$ is no more than $N$
  (or see \cite[p.~207]{wendland05book}),
  and so the
  hence spectral norm of $K + \sigma^2 I$ is no more than $N + \sigma^2$.
  Since $K$ is positive definite, its eigenvalues are nonnegative,
  so that no eigenvalue of $K + \sigma^2I$ is less than $\sigma^2$.
  The proof is completed since $\kappa(K+\sigma^2 I)$ is the ratio of maximum to minimum eigenvalues, because $K$ is symmetric.
\end{proof}

The upper bound is sharp,
since $K$ may come arbitrarily close to the matrix with all entries $1$
when all data points approach the same point.%
\footnote{Alternatively, for fixed $k$ and data domain, as $N\to\infty$ the minimum eigenvalue of $K$ vanishes \cite[p.~54]{steinprecond}.}
The trivial lower bound $\kappa(K+\sigma^2 I)\ge 1$ is also sharp since
$K$ approaches $I$ when all data points move far from each other compared to the kernel width $\ell$.
Only with assumptions on the distribution of data points (their typical separation
compared to $\ell$) could stronger statements be made.
For instance, for fixed $k$ and data domain, as $N\to\infty$ then
$\kappa(K+\sigma^2I)$ grows no slower than $cN/\sigma^2$ for some $c>0$
(this follows from \cite[p.~54]{steinprecond}).
Note that Proposition~\ref{p:cond} could be generalized to the case $k(\mbf{0})\neq 1$
simply by replacing $\sigma^2$ by $\sigma^2/k(\mbf{0})$ in the right-hand side of
\eqref{kappa_bd}.

\begin{rmk}\label{r:illcond}
  The bound \eqref{kappa_bd} is large in practice: for instance in a typical big
problem
with $N=10^7$ and $\sigma = 0.1$, the bound allows $\kappa$ to be $10^9$,
meaning that using single precision there may be \emph{no correct digits} in the
solution $\bal$ to the function space GP linear system \eqref{fs_sys},
and possibly catastrophic cancellation in evaluation of the posterior mean.
\end{rmk}

Two natural questions now arise:
i) Is the GP regression \emph{problem} itself as ill-conditioned as the above
suggests?
ii) Is the weight-space system matrix $A_{\tbox{WS}}$
similarly conditioned to the (exact) function-space matrix $K+\sigma^2I$ ?
We now show that the answers are respectively ``no'' and ``typically yes.''

\begin{pro}[The GP regression problem 
    at the data points is well-conditioned]
  \label{p:mucond}
  Given data points $x_1,\dots,x_N\in\R^d$, a positive definite kernel,
  and $\sigma>0$,
the absolute condition number of the map from the data vector $\mbf{y}$ to the
posterior mean $\bmu:= \{\mu(x_n)\}_{n=1}^N$ is less than 1.
  \end{pro}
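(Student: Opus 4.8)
The plan is to recognize that the map $\mathbf{y}\mapsto\bmu$ is \emph{linear}, so that its absolute condition number is simply the spectral norm of the matrix representing it, and then to bound that norm. First I would write the map explicitly. From the function space system \eqref{fs_sys} we have $\bal = (K+\sigma^2 I)^{-1}\mathbf{y}$, and since the posterior mean at the data points is $\bmu = K\bal$, the composite map is $\bmu = M\mathbf{y}$ with $M := K(K+\sigma^2 I)^{-1}$. For a linear map the absolute condition number in the Euclidean norm is the same at every input and equals $\|M\|$, the spectral norm; hence the claim reduces to showing $\|M\| < 1$.

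Next I would diagonalize. Because $K$ is symmetric and positive semidefinite, write $K = Q\Lambda Q\tran$ with $Q$ orthogonal and $\Lambda = \mathrm{diag}(\lambda_1,\dots,\lambda_N)$, $\lambda_i\ge 0$. Since $K$ and $(K+\sigma^2 I)^{-1}$ commute (both being functions of $K$), the matrix $M$ is symmetric and is diagonalized by the same $Q$, with eigenvalues $\lambda_i/(\lambda_i+\sigma^2)$. The spectral norm of a symmetric matrix equals its largest eigenvalue in magnitude, so $\|M\| = \max_i \lambda_i/(\lambda_i+\sigma^2)$.

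Finally I would bound each eigenvalue. For $\sigma^2>0$ and $\lambda\ge 0$ the scalar $\lambda/(\lambda+\sigma^2)$ is nonnegative and strictly less than $1$, since it equals $1-\sigma^2/(\lambda+\sigma^2)$ and the subtracted term is strictly positive. Taking the maximum over the finitely many eigenvalues preserves the strict inequality, giving $\|M\|<1$, and therefore the absolute condition number is less than $1$.

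I expect no serious obstacle here; the only points needing care are (i) confirming that the absolute condition number of a linear map is exactly the operator norm, so the statement is norm-dependent but the bound holds in the Euclidean norm used throughout, and (ii) keeping the inequality \emph{strict}, which relies on $\sigma^2>0$ together with each $\lambda_i$ being finite for finite $N$. Note that as an eigenvalue grows (e.g.\ as $N\to\infty$) one has $\lambda_i/(\lambda_i+\sigma^2)\to 1$, so the bound is sharp and cannot be replaced by any constant strictly below $1$. This sharpness is precisely the contrast with the large bound \eqref{kappa_bd} of Proposition~\ref{p:cond} that underlies the ``well-conditioned problem, ill-conditioned algorithm'' observation.
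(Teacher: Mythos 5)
Your proof is correct and follows essentially the same route as the paper: diagonalize $K$ orthogonally, observe that the solution operator $K(K+\sigma^2 I)^{-1}$ has eigenvalues $\lambda_n/(\lambda_n+\sigma^2)<1$, and conclude that its spectral norm is below $1$. The extra remarks on the condition number of a linear map being its operator norm and on the sharpness of the bound are sound but not needed beyond what the paper states.
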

\begin{proof}
  Since the resulting $K$ is positive semidefinite
  \cite[Ch.~4]{rasmus1},
  it may be orthogonally diagonalized
  as $K = \sum_{n=1}^N \lambda_n \mbf{v}_n \mbf{v}_n\tran$ with $\lambda_n\ge 0$
  and $\|\mbf{v}_n\| = 1$,
  so the solution is
  $\bmu = K\bal = K (K + \sigma^2 I)^{-1} \mbf{y} =
  \sum_n \lambda_n(\lambda_n + \sigma^2)^{-1} \mbf{v}_n \mbf{v}_n\tran \mbf{y}$.
  Thus the solution operator has spectral norm $\max_n \lambda_n/(\lambda_n + \sigma^2)<1$.
  \end{proof}

Thus this regression problem remains well conditioned,
even as $N$ grows or $\sigma\to0$,
when (as shown above and below) the system matrix can become very ill-conditioned!
One might worry that an algorithm that solves such an
ill-conditioned system is {\em unstable},
for instance unnecessarily amplifying the kernel error $\eps$.
In exact arithmetic, \eqref{muerr} bounds such amplification
by the (large) constant $N/\sigma^2$.
In floating point arithmetic the situation may be more dire, due to
catastrophic cancellation in evaluating $\bmu = \X \bbe$, if $\|\bbe\|$ is large.

For $\mu(x)$ at new targets, even less is known (at least to these authors):
it is unknown whether the absolute condition number of the regression problem is
even $\bigO(1)$,
or whether the extremely large $N^2/\sigma^4$ amplification factor in
the naive bound \eqref{muerrnew} could be reduced.

We now turn to question ii):
how close are the condition numbers of the 
weight-space $A_\tbox{WS}$, approximate function-space $A_\tbox{FS}$, and
exact function-space $K+\sigma^2 I$ system matrices?
We now show, as $\eps\to0$ (good kernel approximation),
that neither
of the two approximate linear systems can be worse conditioned than the exact one.
To interpret the following, recall from \eqref{Kerr} that a pointwise
kernel error of $\eps$ leads to the simple bound $\|\tilde K - K\|\le N\eps$.

\begin{lem}[Approximated linear system condition number bounds]
  \label{l:condWS}
  Let $\X\in\C^{N\times M}$ with $M<N$, such
  that $\tilde K = \X \Xt$ approximates
  $K$ to spectral norm error $\|\tilde K - K\|\le N\eps$.
  Then the approximated function-space condition number denoted by $\kappa_{\tbox{FS}} = \kappa(A_{\tbox{FS}})$, and
  the weight space condition number denoted by $\kappa_{\tbox{WS}} = \kappa(A_{\tbox{WS}})$, both have an upper bound
  \be
    \kappa_{\tbox{FS}},
  \kappa_\tbox{WS}
    \;\le \; \biggl( 1 + \frac{\eps N}{\sigma^2}\biggr)
  \kappa(K + \sigma^2 I) + \frac{\eps N}{\sigma^2} .
  \label{condWS}
  \ee
\end{lem}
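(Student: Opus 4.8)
The plan is to reduce everything to the eigenvalues of $\X\Xt$ (equivalently, the squared singular values of $\X$) and then pass from $\tilde K = \X\Xt$ to $K$ by eigenvalue perturbation. First I would record the standard fact that the nonzero eigenvalues of $\Xt\X$ and $\X\Xt$ coincide: writing $s_1\ge\cdots\ge s_M\ge 0$ for the eigenvalues of $\Xt\X$, the matrix $A_\tbox{WS}=\Xt\X+\sigma^2 I$ has eigenvalues $\{s_i+\sigma^2\}_{i=1}^M$, while $A_\tbox{FS}=\X\Xt+\sigma^2 I$ has these same eigenvalues together with $\sigma^2$ repeated $N-M>0$ times, since $\X\Xt$ has rank at most $M<N$. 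Both matrices therefore have top eigenvalue $s_1+\sigma^2=\|\tilde K\|+\sigma^2$, whereas the bottom eigenvalue is $s_M+\sigma^2\ge\sigma^2$ for $A_\tbox{WS}$ and exactly $\sigma^2$ for $A_\tbox{FS}$. Hence $\kappa_\tbox{WS}\le\kappa_\tbox{FS}=1+\|\tilde K\|/\sigma^2$, and it suffices to bound $\kappa_\tbox{FS}$.

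Next I would invoke Weyl's inequality together with the hypothesis $\|\tilde K-K\|\le N\eps$ to control the extreme eigenvalues of $K$. For the largest, $\|\tilde K\|=\lambda_{\max}(\tilde K)\le\lambda_{\max}(K)+N\eps$. The one piece of genuine content is the observation that because $\tilde K=\X\Xt$ has rank at most $M<N$ it carries a zero eigenvalue, so $\lambda_{\min}(\tilde K)=0$; Weyl then forces $\lambda_{\min}(K)\le\lambda_{\min}(\tilde K)+N\eps=N\eps$. This is precisely what is needed to compare the denominator $\lambda_{\min}(K)+\sigma^2$ appearing in $\kappa(K+\sigma^2 I)$ with the denominator $\sigma^2$ appearing in $\kappa_\tbox{FS}$.

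Finally I would assemble these estimates. From $\lambda_{\min}(K)\le N\eps$ we get $\lambda_{\min}(K)+\sigma^2\le\sigma^2\bigl(1+N\eps/\sigma^2\bigr)$, so
$$
\frac{\lambda_{\max}(K)+\sigma^2}{\sigma^2}
\;\le\;
\Bigl(1+\frac{N\eps}{\sigma^2}\Bigr)\frac{\lambda_{\max}(K)+\sigma^2}{\lambda_{\min}(K)+\sigma^2}
\;=\;
\Bigl(1+\frac{N\eps}{\sigma^2}\Bigr)\kappa(K+\sigma^2 I).
$$
Combining with $\|\tilde K\|\le\lambda_{\max}(K)+N\eps$ then yields
$$
\kappa_\tbox{FS}=1+\frac{\|\tilde K\|}{\sigma^2}
\;\le\;
\frac{\lambda_{\max}(K)+\sigma^2}{\sigma^2}+\frac{N\eps}{\sigma^2}
\;\le\;
\Bigl(1+\frac{N\eps}{\sigma^2}\Bigr)\kappa(K+\sigma^2 I)+\frac{N\eps}{\sigma^2},
$$
which is the claimed bound for $\kappa_\tbox{FS}$, and hence also for $\kappa_\tbox{WS}\le\kappa_\tbox{FS}$. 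I expect the only subtle step to be the lower eigenvalue comparison: one must exploit the rank deficiency of $\tilde K$ (not merely the norm bound $\|\tilde K-K\|\le N\eps$) to guarantee $\lambda_{\min}(K)\le N\eps$, for without it the two denominators cannot be matched and the factor $(1+N\eps/\sigma^2)$ would fail to appear. Everything else is bookkeeping with Weyl's inequality and the singular-value identification of the two spectra.
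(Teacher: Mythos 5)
Your proposal is correct and follows essentially the same route as the paper: both arguments hinge on the rank deficiency $M<N$ forcing a zero eigenvalue of $\tilde K$, hence $\lambda_{\min}(K)\le N\eps$ by eigenvalue perturbation (your Weyl inequality is the paper's symmetric Bauer--Fike), combined with $\lambda_{\max}(\tilde K)\le\lambda_{\max}(K)+N\eps$ and the lower bound $\sigma^2$ on the smallest eigenvalues of $A_\tbox{FS}$ and $A_\tbox{WS}$. Your explicit observation that $\kappa_\tbox{WS}\le\kappa_\tbox{FS}=1+\|\tilde K\|/\sigma^2$ is a slightly cleaner way to organize the final bookkeeping than the paper's, but it is not a different argument.
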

\begin{proof}
  Our main tool is eigenvalue perturbation: for each eigenvalue of $K$ there is an
  eigenvalue of $\tilde K$ within a distance of $\eps N$,
  which follows from
  the symmetric case of the Bauer--Fike theorem \cite[Thm.~7.7.2]{golubvanloan}
  and $\|\tilde K - K\|\le \eps N$.
  Since $M<N$, $\tilde K$ has a zero eigenvalue, so the minimum
  eigenvalue of $K$ is $\lambda_\text{min} \in [0, \eps N]$.
  Abbreviating $\kappa:=\kappa(K + \sigma^2 I) = (\lambda_\text{max}+\sigma^2)/(\lambda_\text{min}+\sigma^2)$,
  then $\lambda_\text{max} \le (\sigma^2+\eps N)\kappa - \sigma^2$.
  Again by eigenvalue perturbation, the largest eigenvalue of $\tilde K$
  is no more than $\lambda_\text{max} + \eps N \le (\sigma^2+\eps N)\kappa - \sigma^2 + \eps N$,
  and the same is true for $\Xt\X$ since its nonzero eigenvalues match those of $\tilde K$.
  Thus $\|\Xt\X + \sigma^2 I\| \le (\sigma^2+\eps N)\kappa + \eps N$. Finally, since 
$\|(\Xt\X + \sigma^2 I)^{-1}\|\le\sigma^{-2}$, and $\|(\X\Xt + \sigma^2 I)^{-1}\|=\sigma^{-2}$, the results then follow.
\end{proof}

We now report a test of
empirical condition number growth vs $N$ and $\sigma^{-2}$, for random data in
$d=1$.
Figures \ref{fig:cond1} and \ref{fig:cond_heatmaps} compare
$\kappa(K+\sigma^2 I)$, $\kappa_{\tbox{FS}}$, $\kappa_\tbox{WS}$, and also the
theoretical upper bound $N/\sigma^2 + 1$ from \eqref{kappa_bd}.
The upper bound applies to all three as $\eps\to0$.
The plots show that the three condition numbers are extremely close to each other,
and that the bound overestimates them
by only a factor of roughly $2$, over a wide $(N,\sigma)$ parameter space.
Here we used dense symmetric diagonalization
for the ``exact'' calculations of $\kappa(K+\sigma^2 I)$
and $\kappa_\tbox{FS}$ for $N\le 10^4$,
and EFGP with $\eps$ converged down to $10^{-16}$
(i.e., machine precision) for the other cases.
By Corollary~\ref{c:SEparams} this requires only a small $m<30$,
for which dense diagonalization of $A_\tbox{WS}$ is trivial.

Finally, the above has consequences for the convergence rate of conjugate gradient to
solve either function-space or weight-space linear systems.
For instance, if $\bbe$ is the exact solution to \eqref{WS},
and $\bbe_k$ its approximation at the $k$th CG iteration \cite[\S10.2]{golubvanloan},
\begin{align}\label{85}
  \| \bbe_k - \bbe \| \; = \;
  \bigO\bigg( \bigg(\frac{\sqrt{\kappa_\tbox{WS}} - 1}{\sqrt{\kappa_\tbox{WS}} + 1} \bigg)^k \bigg).
\end{align}
Since $\kappa_\tbox{WS} \lesssim N/\sigma^2$ for small $\eps$,
this gives convergence no slower than $e^{-(2\sigma/\sqrt{N})k}$.
Thus one requires at most $\bigO\bigl(\log(1/\eps) \sqrt{N}/\sigma \bigr)$
iterations to reach a residual $\eps$.

\section{Conclusions and generalizations \label{s:conc}}

In this paper, we provided a detailed error analysis for the equispaced Fourier 
Gaussian process (EFGP) kernel representation of \cite{efgp_comput}. 
The main results (Theorems~\ref{t:Gerr} and \ref{t:Cerr})
gave uniform kernel approximation error bounds for the popular squared-exponential and Mat\'ern kernels, with all constants explicit, in general dimension.
This led to Fourier quadrature grid parameters
that guarantee a desired kernel error
(Corollaries~\ref{c:SEparams} and \ref{c:matpars}).
Since this equispaced Fourier grid is maybe the simplest spectral kernel
approximation,
we expect these to find applications in other kernel methods.

For the Mat\'ern kernel with small $\nu$, these uniform error
bounds are in practice pessimistic when it comes to {\em root mean square} errors,
because of the slow Fourier decay of the kernel.
Thus we proposed a conjecture on the root mean square kernel error, and supported it
by numerical tests. A proof, even for iid random
data coming from a smooth density function, remains open.

Finally, we proved an upper bound on the condition number of the
approximate function- and 
weight-space linear systems for arbitrary data distributions,
showing how they approach the ``exact'' GP condition number as the kernel
approximation error vanishes.
We then showed experimentally
that such condition numbers for a simple random data point distribution are
about as ill-conditioned as possible, i.e., within a small factor of $N/\sigma^2$.
Yet, Proposition~\ref{p:mucond} reminds one that
the GP regression {\em problem} itself (at least for the mean at the
data points themselves) is well conditioned.
In short,
{\em an ill-conditioned algorithm appears to be necessary to
  solve a well-conditioned problem},
raising the eyebrows of any numerical analyst.
This motivates the future study of stability (coefficient norm growth and the
resulting rounding
loss) in GP settings, especially in an era of reduced (e.g. half-) precision
arithmetic.
It also suggests the continued study of preconditioners for GP regression
problems.

Many other interesting analysis questions remain,
such as Fourier kernel approximation bounds for other common kernels,
the conditioning of the regression problem
to new targets, and a rigorous lower bound on $\kappa_\tbox{WS}$
(analogous to Lemma~\ref{l:condWS}), which would demand
knowledge of $K$'s smallest eigenvalue.

\section*{Acknowledgments}
The authors are grateful for helpful discussions with
Charlie Epstein and Jeremy Hoskins.
The second author is supported in part by the Alfred P. Sloan Foundation,
the Office of Naval Research, and the NSF. 
The Flatiron Institute is a division of the Simons Foundation.

\bibliographystyle{abbrv}
\bibliography{refs}

\end{document}